\documentclass[10pt,reqno]{amsart}

\usepackage[latin1]{inputenc}
\usepackage[T1]{fontenc}
\usepackage{amsmath,amsfonts,amssymb,amsthm,cases}
\usepackage{geometry}
\usepackage{array,dcolumn,booktabs}
\usepackage[colorlinks=true,linkcolor=blue,citecolor=red]{hyperref} 

\usepackage{etoolbox}
\makeatletter
\patchcmd{\@maketitle}
  {\ifx\@empty\@dedicatory}
  {\ifx\@empty\@date \else {\vskip3ex \centering\footnotesize\@date\par\vskip1ex}\fi
   \ifx\@empty\@dedicatory}
  {}{}
\patchcmd{\@adminfootnotes}
  {\ifx\@empty\@date\else \@footnotetext{\@setdate}\fi}
  {}{}{}
\makeatother

\geometry{scale=0.75}

 \author{Victor Vilaça Da Rocha}
\address{Laboratoire de Math\'ematiques Jean Leray, Universit\'e de Nantes, UMR CNRS 6629, \newline
2, rue de la Houssini\`ere, 44322 Nantes Cedex 03, France}
\email{victor.vilaca-da-rocha@univ-nantes.fr}
\date{\today}

\title[Asymptotic behavior of Schrödinger systems on $\mathbb{R}$]
{Asymptotic behavior of solutions to the cubic coupled Schrödinger systems in one space dimension}

\newtheorem{lem}{Lemma}[section]
\newtheorem{pro}[lem]{Proposition}
\newtheorem{theo}[lem]{Theorem}
\newtheorem{rem}[lem]{Remark}

\numberwithin{equation}{section}  

\begin{document}

\begin{abstract}
In this paper, we study a coupled nonlinear Schrödinger system with small initial data in the one dimension Euclidean space.
Such a system appears in the context of the coupling between two different optical waveguides.
We establish an asymptotic nonlinear behavior and a decay estimate for solutions of this system. 
The proof uses a recent work of Kato and Pusateri.
\end{abstract}

\subjclass[2010]{35Q55}
\keywords{Modified Scattering, Nonlinear Schrödinger equation.}
\thanks{Work partially supported by the grant ANA\'E, ANR-13-BS01-0010-03.
The author warmly thanks B. Grébert and L.~Thomann for several useful discussions and
their many suggestions.}

\maketitle

\section{Introduction}

We consider the following system 
\begin{equation}\left\{
\begin{array}{r c l}
i\partial_tu+\partial_{xx}u&=&\left|v\right|^2u, \quad (t,x)\in [1,+\infty)\times\mathbb{R}, \\ 
i\partial_tv+\partial_{xx}v&=&\left|u\right|^2v, \\
u(1,x)=u_1(x), &&\quad v(1,x)=v_1(x).
\end{array}\label{sys}\right.\end{equation}

\subsection{Motivation and background.}
An interesting area within the study of the asymptotic behavior of small solutions of nonlinear dispersive PDEs is the search of nonlinear global dynamics.
In the well-known case of the cubic Schrödinger equation, different methods have been treated by several authors.

On the one hand, the asymptotic behavior depends on the geometry of the spatial domain. 
\begin{itemize}
 \item For Euclidean spaces, we mainly have phase corrections results. 
For example, Kato and Pusateri treat the case $x\in\mathbb{R}$ in \cite{KP} using only tools from the analysis in Fourier space. For general Euclidean spaces, 
we can look at \cite{HN} where Hayashi and Naumkin deal with $x\in\mathbb{R}^n$.
 \item In the case of compact domains, the usual example is the study of the torus $\mathbb{T}^d$. We can for example mention the work of Colliander, Keel, 
Staffilani, Takaoka and Tao who showed some first results in the search of unboundedness of Sobolev norms for small initial data in the spatial space~
$\mathbb{T}^2$ in~\cite{CKSTT}.
\item Combining these two kinds of spaces, we can study the product spaces $\mathbb{R}^n\times \mathbb{T}^d$. 
The first to consider such spaces were Tzvetkov and Visciglia. They obtained some scattering behaviors in~$\mathbb{R}^n\times \mathcal{M}$, where $\mathcal{M}$
is a compact manifold (see \cite{TV}), or in $\mathbb{R}^n\times \mathbb{T}$ where they extended their point of view and studied the well-posedness of their 
equation (see \cite{TV2}).
For the case $\mathbb{R}\times \mathbb{T}^d$, we can cite the work of Hani, Pausader, Tzvetkov and Visciglia, who showed in 2013 (see \cite{HPTV}) a real
growth of Sobolev norms by adding a direction of diffusion to the compact problem. For that purpose, they exhibited a modified scattering of the solution of 
the Schrödinger equation to the solution of the resonant system associated. Contrary to the Euclidean case, this is an effective scattering, and not only
a phase correction.
\end{itemize}

On the other hand, after the geometry of the spatial domain, we can deal with the equation itself. Using the method presented in \cite{HPTV}, similar 
results have been obtained by adding a potential (see \cite{GPT2}), 
\begin{minipage}{\linewidth}
a~harmonic trapping (see \cite{HT}), or by considering different derivatives along 
the Euclidean direction and the periodic one (see \cite{Xu}). In the last mentioned article, Xu exhibits a scattering between the Schrödinger equation~
$i\partial_tU+\Delta_\mathbb{R} U-|\nabla_\mathbb{T}|U=|U|^2U$ in the spatial domain $\mathbb{R}\times\mathbb{T}$ and the cubic Szeg\H{o} equation. \end{minipage}\bigskip

In this article, we focus on a coupled Schrödinger system. Such a system occurs while, for example, looking for the coupling between two different 
optical waveguides, that can be provided by a dual-core single-mode fiber (see \cite{AB}). Other examples are given by two orthogonally polarized components 
traveling at different speeds because of different refractive indices associated with them; or by two distinct pulses with different carrier frequencies but with 
the same polarization. A surprising fact about these examples is that they lead to a system close to the Schrödinger system \eqref{sys}, obtained by interchanging 
the spatial and temporal coordinates.

From our point of view, such a system is interesting. Indeed this coupling effect can provide more nonlinear asymptotic behavior than a single equation.
For example, Grébert, Paturel and Thomann exhibited some beating effects (see \cite{GPT}), that is to say some energy exchanges between different modes of the 
solutions, although they were just dealing with the spatial domain $\mathbb{T}$. Another example of study of Schrödinger systems is given by Kim
in \cite{Kim}. Considering relations on the masses in the equations, the author obtains $L^\infty$ decay of the solution for the spatial domain $\mathbb{R}$.

The goal of this article is to present a self contained proof of nonlinear behavior in the spatial domain~$\mathbb{R}$.
Following the method of Kato and Pusateri in \cite{KP}, we will first prove a $L^\infty$ decay of the solutions. Then, we will highlight an asymptotic behavior, 
in the sense that we will be looking for a pair $(W_f,W_g)$ depending only on the space variable in a space to be determined, such that:
\begin{equation}\left\{\begin{array}{rcl}
\|\hat{w}_f(t,.)-W_f\|&\underset{t\to \infty}{\longrightarrow}&0,\\
\|\hat{w}_g(t,.)-W_g\|&\underset{t\to \infty}{\longrightarrow}&0,
\end{array}\right.\label{W}\end{equation}
where $|\hat{w}_f(t,\xi)|=|\hat{u}(t,\xi)|$ and $|\hat{w}_g(t,\xi)|=|\hat{v}(t,\xi)|$.
This is a result of phase correction scattering, and it is interesting to see what effect the coupling has in this case, to get informations on $W_f$ and $W_g$, 
and to approximate the speed of this convergence.
Moreover, we exhibit an asymptotic formula for large time of the solutions.

\subsection{Notations and norms.}

We define the spatial Fourier transform in Schwartz space, for $\varphi\in S(\mathbb{R})$, by 
$$\mathcal{F}(\varphi)(\xi)=\hat{\varphi}(\xi):=\frac{1}{\sqrt{2\pi}}\displaystyle\int_{\mathbb{R}}e^{-ix\xi}\varphi(x)dx.$$
We use the notation $f\lesssim g$ to denote that there exists a positive constant $M$ such that $f\leq Mg$. 
This notation will be useful, allowing us to avoid dealing with all the constants in the different inequalities.

\noindent When a function depends on two variables $t$ and $x$, we denote by $\varphi(t)$ the function $\varphi(t):x\mapsto\varphi(t,x).$

\noindent Working with small initial data, we can expect the nonlinearity to stay small, and thus the linear dynamics to be dominant. 
This is the reason why it is interesting to consider the profile of a solution, which is the backwards linear evolution
of a solution of the nonlinear equation.
We define $f$ (respectively $g$) the profile of the solution $u$ (respectively $v$) by 
\begin{equation}f(t,x):=e^{-it\partial_{xx}}u(t,x) \qquad\qquad g(t,x):=e^{-it\partial_{xx}}v(t,x).\label{prof}\end{equation}
We will see in Subsection \ref{introprofile} that after considering Duhamel's formula of the solutions, the equations in~$f$ and $g$ are easier to treat
than the ones with $u$ and $v$. This is the main reason why the profiles have been introduced.                                                                                           
 
The norm used will be essential in order to prove the global existence of the solutions.
Let $n\in\mathbb{N}$, we define the spaces $H^{n,0}_x, H^{0,n}_x$ and $L_T^\infty$ corresponding to the following norms:
\begin{itemize}
 \item $\|\varphi\|_{L_x^\infty}:=\displaystyle\sup_{x\in\mathbb{R}}|\varphi(x)|,$
 \item $\|\varphi\|_{H^{n,0}_x}:=\displaystyle\sum_{i=0}^{n}\|\partial^i_x\varphi\|_{L^2_x},$
 \item $\|\varphi\|_{H^{0,n}_x}:=\displaystyle\sum_{i=0}^{n}\|x^i\varphi\|_{L^2_x},$
 \item $\|\varphi\|_{L_T^\infty}:=\displaystyle\sup_{t\in[1,T]}|\varphi(t)|.$
\end{itemize}
With these norms, we follow the method of Kato and Pusateri in \cite{KP} to construct the space 
$$X_T:=\left\{\varphi, \|\varphi\|_{X_T}:=\|\varphi\|_{L^\infty_TL^\infty_x}+\|t^{-\alpha}\varphi\|_{L^\infty_TH^{1,0}_x}
+\|t^{-\alpha}\varphi\|_{L^\infty_TH^{0,2n+1}_x}<\infty\right\},$$
where $\alpha>0$ is small enough.

We want to obtain a $L^\infty$ decay of the solutions. For that purpose, we need to control the behavior of the cubic nonlinearity, which is why the 
terms $\|\varphi\|_{L^\infty_TL^\infty_x}$ and $\|t^{-\alpha}\varphi\|_{L^\infty_TH^{1,0}_x}$ are required in the definition of the norm. 
With these two terms, we can show that we have a unique and global pair of solutions, and that the scattering limits $W_f,W_g$ of the equations \eqref{W} 
are in $L^\infty$. The last term $\|t^{-\alpha}\varphi\|_{L^\infty_TH^{0,2n+1}_x}$ will allow us to show that $W_f,W_g$ are also in $H^{0,n}$.

\subsection{Statement of the results.}

The idea here is to apply the result of \cite{KP} to our coupled Schrödinger system \eqref{sys}. As we can expect, we also obtain a global existence, 
a decay estimate, a scattering result and an asymptotic formula.

\begin{theo}\label{theo}
Fix $n\in\mathbb{N}$ and $0<\nu<\frac14$. Assume $\left\|u_1\right\|_{H^{2n+1,0}_x}+\left\|u_1\right\|_{H^{0,1}_x}+\left\|v_1\right\|_{H^{2n+1,0}_x}+\left\|v_1\right\|_{H^{0,1}_x}\leq\varepsilon$ 
for~$\varepsilon$ small enough.
Let $I=[1,+\infty)$ and set $\mathcal{F}:=\mathcal{C}\left(I;H^{2n+1,0}(I)\cap H^{0,1}(I)\right)$.
Then \eqref{sys} admits a unique pair of solutions~$(u,v)\in\mathcal{F}\times\mathcal{F}$.
The pair of solutions satisfies the following decay estimates:
$$\|u(t)\|_{L^\infty_x}\lesssim\dfrac{1}{t^{\frac{1}{2}}},\qquad\quad\|v(t)\|_{L^\infty_x}\lesssim\dfrac{1}{t^{\frac{1}{2}}}.$$
\begin{minipage}{\linewidth}
Furthermore, there exists a unique pair of functions $(W_f,W_g)\in (L^\infty_\xi\cap H^{0,n}_\xi)\times (L^\infty_\xi\cap H^{0,n}_\xi)$ such that for~$t\geq1$,
\begin{equation*}\begin{cases}
\|\hat{f}(t,.)\exp\left(\dfrac{i}{2\sqrt{2\pi}}\displaystyle\int_1^t\frac{1}{s}\left|\hat{v}(s,.)\right|^2ds\right)-W_f\|_{L^\infty_\xi}&\lesssim t^{-\frac14+\nu},\\
\|\hat{g}(t,.)\exp\left(\dfrac{i}{2\sqrt{2\pi}}\displaystyle\int_1^t\frac{1}{s}\left|\hat{u}(s,.)\right|^2ds\right)-W_g\|_{L^\infty_\xi}&\lesssim t^{-\frac14+\nu},
\end{cases}\end{equation*}
and
\begin{equation*}\begin{cases}
\|\hat{f}(t,.)\exp\left(\dfrac{i}{2\sqrt{2\pi}}\displaystyle\int_1^t\frac{1}{s}\left|\hat{v}(s,.)\right|^2ds\right)-W_f\|_{H^{0,n}_\xi}&\lesssim 
t^{-\frac18+\frac\nu2},\\
\|\hat{g}(t,.)\exp\left(\dfrac{i}{2\sqrt{2\pi}}\displaystyle\int_1^t\frac{1}{s}\left|\hat{u}(s,.)\right|^2ds\right)-W_g\|_{H^{0,n}_\xi}&\lesssim 
t^{-\frac18+\frac\nu2},
\end{cases}\end{equation*}
where $f$ and $g$ are defined in \eqref{prof}.
Finally, we have an asymptotic formula. It exists a unique pair of functions $\Gamma_f,\Gamma_g\in L^\infty_\xi$ such that for large time $t$,
\begin{equation*}\begin{cases}
u(t,x)=\dfrac1{(2it)^{\frac12}}W_f(\dfrac x{2t})
\exp\left(i\dfrac{x^2}{4t}-\dfrac{i}{2\sqrt{2\pi}}(|W_g|^2(\dfrac x{2t})\ln(t)+\Gamma_g(\dfrac x{2t}))\right)+\mathcal{O}(t^{-\frac34+\nu}),\\
v(t,x)=\dfrac1{(2it)^{\frac12}}W_g(\dfrac x{2t})
\exp\left(i\dfrac{x^2}{4t}-\dfrac{i}{2\sqrt{2\pi}}(|W_f|^2(\dfrac x{2t})\ln(t)+\Gamma_f(\dfrac x{2t}))\right)+\mathcal{O}(t^{-\frac34+\nu}).
\end{cases}\end{equation*}
\end{minipage}
\end{theo}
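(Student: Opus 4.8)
The plan is to adapt the Fourier-analytic framework of Kato and Pusateri to the coupled system, treating the pair $(f,g)$ of profiles as a single fixed point in $X_T\times X_T$. First I would derive the equations for the profiles: applying $e^{-it\partial_{xx}}$ to \eqref{sys} and using Duhamel's formula, $\hat f(t,\xi)$ solves an integral equation whose right-hand side is a trilinear Fourier integral in $\hat g,\overline{\hat g},\hat f$, and symmetrically for $\hat g$. The key structural observation, exactly as in \cite{KP}, is that after the change of variables adapted to the Schr\"odinger evolution, the phase $\phi(\xi,\eta,\sigma)=(\xi-\eta)(\xi-\sigma)$ (up to signs) has a single nondegenerate stationary point at $\eta=\sigma=\xi$; the stationary phase contribution there is the \emph{resonant} term $\tfrac{1}{2\sqrt{2\pi}}\,\tfrac1t\,|\hat v(t,\xi)|^2\hat f(t,\xi)$, and the remainder is genuinely lower order, gaining a power of $t$. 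So the effective ODE is
\begin{equation*}
i\partial_t\hat f(t,\xi)=\frac{1}{2\sqrt{2\pi}}\,\frac1t\,|\hat v(t,\xi)|^2\,\hat f(t,\xi)+\mathcal R_f,\qquad i\partial_t\hat g(t,\xi)=\frac{1}{2\sqrt{2\pi}}\,\frac1t\,|\hat u(t,\xi)|^2\,\hat g(t,\xi)+\mathcal R_g,
\end{equation*}
with $\mathcal R_f,\mathcal R_g$ controlled in $L^\infty_\xi$ by $t^{-1-\kappa}$ times powers of the $X_T$-norms, for some $\kappa>0$.

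Second I would run the bootstrap/contraction in $X_T\times X_T$. The point is that $|\hat f|$ and $|\hat v|$ on the diagonal are comparable to the $L^\infty_x$ norms of $u,v$ via the standard dispersive estimate $\|e^{it\partial_{xx}}\phi\|_{L^\infty_x}\lesssim t^{-1/2}\|\hat\phi\|_{L^\infty_\xi}+(\text{lower order in }\|\phi\|_{H^{1,0}})$, so controlling $\|\hat f\|_{L^\infty_\xi},\|\hat g\|_{L^\infty_\xi}$ uniformly in $t$ yields the claimed $t^{-1/2}$ decay of $u,v$. Since the resonant coefficient $\tfrac1t|\hat v|^2$ is purely imaginary times real — it multiplies $\hat f$ by a phase — it does not change $|\hat f|$; differentiating $|\hat f|^2$ only sees $\mathcal R_f$, giving $\partial_t|\hat f(t,\xi)|^2\lesssim t^{-1-\kappa}$, hence $|\hat f|$ (and $|\hat g|$) converge in $L^\infty_\xi$ and stay bounded. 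The $H^{0,2n+1}_x$ component of the $X_T$-norm, i.e. weighted Sobolev control of $f,g$, is propagated with the allowed $t^\alpha$ growth by commuting the weight $x\sim\partial_\xi$ through the trilinear integral; the endpoint/worst term there is what forces $2n+1$ rather than $n$ weights and the smallness of $\alpha$. Standard estimates on the oscillatory integral (integration by parts in $\eta,\sigma$ away from the stationary point, and a $TT^*$/Hausdorff-Young argument near it) close the a priori bounds for $\varepsilon$ small, and the same estimates on differences give uniqueness; local existence in $\mathcal F\times\mathcal F$ plus these global bounds gives the global solution.

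Third, for the modified scattering I would define $\widetilde W_f(t,\xi):=\hat f(t,\xi)\exp\!\big(\tfrac{i}{2\sqrt{2\pi}}\int_1^t\tfrac1s|\hat v(s,\xi)|^2ds\big)$ and compute $\partial_t\widetilde W_f$: the resonant term cancels by design, leaving $\partial_t\widetilde W_f=(\text{phase})\cdot(-i\mathcal R_f)$, which is $O(t^{-5/4+\nu})$ in $L^\infty_\xi$ after optimizing the split of the oscillatory remainder (this is where the exponent $-\tfrac14+\nu$ comes from — the remainder is $t^{-1}\times t^{-1/4+\nu}$ and we integrate). Hence $\widetilde W_f(t)$ is Cauchy in $L^\infty_\xi$, defining $W_f$, with rate $t^{-1/4+\nu}$; the $H^{0,n}_\xi$ rate $t^{-1/8+\nu/2}$ follows by interpolating the $L^\infty_\xi$ bound against the $t^\alpha$-growing $H^{0,2n+1}_\xi\hookrightarrow H^{0,n}_\xi$ bound (so the exponent is roughly half). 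Symmetric statements hold for $g$. Finally the asymptotic formula is obtained by unwinding $u=e^{it\partial_{xx}}f$ through the stationary-phase/asymptotic expansion of the free propagator, $\big(e^{it\partial_{xx}}f\big)(t,x)=\tfrac{1}{(2it)^{1/2}}e^{ix^2/4t}\hat f(t,\tfrac{x}{2t})+O(t^{-3/4+\nu})$ (with the error measured using the $H^{1,0}$ and weighted norms in $X_T$), then substituting $\hat f(t,\xi)=\widetilde W_f(t,\xi)\exp\!\big(-\tfrac{i}{2\sqrt{2\pi}}\int_1^t\tfrac1s|\hat v|^2ds\big)$ and replacing the integral by $|W_g|^2(\xi)\ln t+\Gamma_g(\xi)+o(1)$, where $\Gamma_g(\xi):=\int_1^\infty\tfrac1s\big(|\hat v(s,\xi)|^2-|W_g(\xi)|^2\big)ds$ converges because $|\hat v|^2-|W_g|^2=O(s^{-1/4+\nu})$; all the $o(1)$ errors assemble into $O(t^{-3/4+\nu})$.

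I expect the main obstacle to be the oscillatory-integral remainder estimate: showing that away from the single stationary point the trilinear integral defining $\mathcal R_f$ is genuinely $O(t^{-1-\kappa})$ in $L^\infty_\xi$ \emph{and} that commuting the weight $x^{2n+1}$ (i.e. $\partial_\xi^{2n+1}$) through it still only produces $t^{\alpha}$-admissible growth, since each $\partial_\xi$ can land on the phase and create a factor of $t$ that must be absorbed by integration by parts in $(\eta,\sigma)$. The coupled structure itself adds no real difficulty — the nonlinearity $|v|^2u$ has exactly the same $\bar gg f$-type trilinear form as the scalar cubic case, and the two equations decouple at the level of the resonant system into a phase rotation of each profile by the other's modulus — so the entire burden is in faithfully transcribing the Kato--Pusateri multilinear estimates, keeping track that the phase in the $f$-equation depends on $\hat g$ (hence on $\hat v$) rather than on $\hat f$.
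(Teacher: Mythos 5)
Your overall architecture coincides with the paper's: pass to the profiles $f=e^{-it\partial_{xx}}u$, $g=e^{-it\partial_{xx}}v$, extract the resonant term $\frac{1}{2\sqrt{2\pi}}\frac1t|\hat g|^2\hat f$ from the trilinear Fourier integral, show the remainder decays like $t^{-1-\kappa}$, run a fixed point in an $X_T$-type norm, get the $t^{-1/2}$ decay from the dispersive estimate, obtain $W_f$ by the Cauchy criterion, get the $H^{0,n}$ rate from the interpolation $\|\xi^nR\|_{L^2}\lesssim\|R\|_{L^\infty}^{1/2}\|\xi^{2n+1}R\|_{L^2}^{1/2}$ (which is exactly the paper's explanation of the half-exponent and of the need for $2n+1$ weights), and derive the asymptotic formula from the free-propagator expansion together with the convergent correction $\Gamma_g=\int_1^\infty\frac1s\left(|\hat v(s)|^2-|W_g|^2\right)ds$; your $\Gamma_g$ agrees with the paper's limit of $\gamma_g(t)=\int_1^t\left(|\hat w_g(\tau)|^2-|\hat w_g(t)|^2\right)\frac{d\tau}{\tau}$. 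The one genuine divergence is the proof of the key remainder bound. You propose a stationary-phase analysis: integration by parts in $(\eta,\sigma)$ away from the stationary point and a $TT^*$/Hausdorff--Young argument near it. The paper instead writes the difference between the trilinear integral and its resonant part as $\int(e^{-iab/2s}-1)\,\mathcal F^{-1}_{\eta,\sigma}F\,da\,db$, uses the fractional bound $|e^{-iab/2s}-1|\lesssim(|a||b|/s)^\delta$ with $\delta<\frac14$, computes $\mathcal F^{-1}_{\eta,\sigma}F$ explicitly in physical space as a convolution of $f$ and $g$, and closes with weighted Cauchy--Schwarz, so that only the fractional weight $|x|^{2\delta}$, i.e.\ the $H^{0,1}$ information actually available, is spent. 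This is the point your route glosses over: with only one weight and one derivative on the profiles you cannot perform the two integrations by parts a genuine stationary-phase error expansion requires, and a single integration by parts in $\eta$ already produces a nonintegrable factor $1/\sigma$ together with $\partial_\eta\hat f$, which is controlled only in $L^2$ with $t^\alpha$ growth, while the output must be bounded in $L^\infty_\xi$. The elementary fractional-H\"older trick is what makes the lemma work at this regularity, and it is also the source of the restriction $\nu<\frac14$. Two smaller remarks: the interpolation for the $H^{0,n}$ rate must be applied to the Duhamel integrand $R(s)$ and then integrated (geometric mean of $s^{-1-\delta+3\alpha}$ and $s^{-1+\alpha}$), not to $\hat w_f(t)-W_f$ itself, since $W_f$ is never shown to lie in $H^{0,2n+1}$; and the paper deliberately runs the fixed point on the phase-corrected unknowns $\hat w_f,\hat w_g$ rather than on $\hat f,\hat g$, so that the change of variables is legitimate before existence is known, which is organizational but worth preserving.
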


\begin{rem}Let's begin with some remarks about this theorem.
\begin{itemize}
 \item We see that, as we can expect, the role of the coupling appears here as a phase correction term of the scattering result.
 \item By taking $n=0$ in Theorem \ref{theo}, we obtain that $W_f,W_g\in L^\infty_\xi\cap L^2_\xi$ for a small initial data in $H^{1,0}_x\cap H^{0,1}_x$.
 This is the coupled equivalent of the Theorem 1.1 of \cite{KP}. In this article, we are able to get the control of the $H^{0,n}$ norms too.
 \item The decay estimate is not new here. Indeed, Kim already obtained it in \cite{Kim}, but without scattering result on the solutions.
 \item We see that we need a control on the derivatives on $u_1,v_1$ to obtain a control on the $H^{0,n}$ norm of $W_f,W_g$. The reason is that 
 $W_f,W_g$ are the asymptotic behaviors of $w_f$ and $w_g$, defined in \eqref{defw}, which are, up to a phase correction, the Fourier transforms of $u$ and $v$.
 \item They are two losses that can be surprising. First, the loss of speed between the $L^\infty$ behavior and the $H^{0,n}$ one. Secondly,
 the need for $2n+1$ derivatives in $u,v$ to obtain $n$ multiplications in $W_f,W_g$. Both losses are connected to the same phenomenon. The key part of all the 
 estimations will be the good decay in $L^\infty$ of a remainder term $R$ defined in \eqref{Rdef}. To control the $H^{0,n}$ norm of $R$ by using this $L^\infty$ decay, 
 we use the inequality \eqref{perte}, which gives us these losses. These losses may not be sharp, as this control of the $H^{0,n}$ norm may not be optimal.
\end{itemize}
\end{rem}
\noindent The rest of the paper is organized as follows. In Section \ref{sec2}, we will do some preliminary computations and introduce the notion of profile.
In Section \ref{sec3}, we will prove a key proposition. Finally, the proof of the Theorem \ref{theo} will be treated in Section \ref{sec4}.

\section{Preliminaries}\label{sec2}

In this section, we reformulate the problem by considering Duhamel's formula
of the profiles of the solutions. Moreover, we state a proposition, which will be the main ingredient to begin the proof of the Theorem~\ref{theo}. 
Finally, we state and prove a classical estimate of the Schrödinger linear group.

\subsection{Introduction of the profile.}\label{introprofile}

We apply Duhamel's formula to the system \eqref{sys}. If the solution $u$ exists, then $u=\Phi(u)$, where
$$\Phi(u)(t,x)=e^{i(t-1)\partial_{xx}}u_1(x)-i\displaystyle\int_1^te^{i(t-s)\partial_{xx}}|v(s,x)|^2u(s,x)ds.$$
To get the existence of the solution, we have to apply the fixed point theorem. We filter out the linear contribution by introducing the profile here.
For $f$ defined in \eqref{prof}, Duhamel's formula becomes (with the same notation $\Phi$ for the profile),
$$\Phi(f)(t,x)=e^{-i\partial_{xx}}u_1(x)-i\displaystyle\int_1^te^{-is\partial_{xx}}v(s,x)\overline{v}(s,x)u(s,x)ds.$$
Using that $\widehat{uv}=\frac{1}{\sqrt{2\pi}}\hat{u}\ast\hat{v}$, we compute the spatial Fourier transform of $\Phi(f)$ as follows:
$$\begin{array}{rcl}
\widehat{\Phi(f)}(t,\xi)&=&e^{i\xi^2}\hat{u}_1(\xi)-i\displaystyle\int_1^te^{is\xi^2}\widehat{v\overline{v}u}(s,\xi)ds \\
&=&e^{i\xi^2}\hat{u}_1(\xi)-i\displaystyle\int_1^t\displaystyle\int_{\mathbb{R}^2}
e^{is\xi^2}\hat{u}(s,\alpha)\hat{\overline{v}}(s,\eta-\alpha)\hat{v}(s,\xi-\eta)d\eta d\alpha ds\\
&=&e^{i\xi^2}\hat{u}_1(\xi)-i\displaystyle\int_1^t\displaystyle\int_{\mathbb{R}^2}
e^{is\xi^2}\hat{u}(s,\alpha)\overline{\hat{v}}(s,\alpha-\eta)\hat{v}(s,\xi-\eta)d\eta d\alpha ds.
\end{array}$$
Now, we reintroduce $f=e^{-it\partial_{xx}}u$ and $g=e^{-it\partial_{xx}}v$ in the above equation to get:
$$\begin{array}{rcl}
\widehat{\Phi(f)}(t,\xi)&=&e^{i\xi^2}\hat{u}_1(\xi)-i\displaystyle\int_1^t\displaystyle\int_{\mathbb{R}^2}
e^{is\left(\xi^2-\alpha^2+(\alpha-\eta)^2-(\xi-\eta)^2\right)}\hat{f}(s,\alpha)\overline{\hat{g}}(s,\alpha-\eta)\hat{g}(s,\xi-\eta)d\eta d\alpha ds\\
&=&e^{i\xi^2}\hat{u}_1(\xi)-i\displaystyle\int_1^t\displaystyle\int_{\mathbb{R}^2}
e^{2is\eta(\xi-\alpha)}\hat{f}(s,\alpha)\overline{\hat{g}}(s,\alpha-\eta)\hat{g}(s,\xi-\eta)d\eta d\alpha ds.
\end{array}$$
After the change of variables $\alpha\leftrightarrow\xi-\sigma$, we get
\begin{equation}\label{hatf}
\widehat{\Phi(f)}(t,\xi)=e^{i\xi^2}\hat{u}_1(\xi)-\frac{i}{2\pi}\displaystyle\int e^{2is\eta\sigma}
\hat{g}(s,\xi-\sigma)\overline{\hat{g}}(s,\xi-\eta-\sigma)\hat{f}(s,\xi-\eta)d\sigma d\eta ds.
\end{equation}
We define 
\begin{equation}F(s,\eta,\sigma,\xi):=\hat{g}(s,\xi-\sigma)\overline{\hat{g}}(s,\xi-\eta-\sigma)\hat{f}(s,\xi-\eta).\label{defF}\end{equation}
Using the Plancherel formula on \eqref{hatf} we get
$$\widehat{\Phi(f)}(t,\xi)=e^{i\xi^2}\hat{u}_1(\xi)-\frac{i}{2\pi}\displaystyle\int_1^t\displaystyle\int\mathcal{F}_{\eta,\sigma}\left[e^{2is\eta\sigma}\right]
\mathcal{F}^{-1}_{\eta,\sigma}\left[F(s,\eta,\sigma,\xi)\right]d\sigma d\eta ds.$$
Using $\hat{\delta_a}=\frac{1}{\sqrt{2\pi}}e^{-iax},  \widehat{e^{iax}}=\sqrt{2\pi}\delta_a$ and the fact that
$\widehat{f_\lambda}(\xi)=\frac{1}{\lambda}\hat{f}(\frac{\xi}{\lambda})$ where $f_\lambda(x)=f(\lambda x)$, we have
$$\mathcal{F}_{\eta,\sigma}\left[e^{2is\eta\sigma}\right](a,b)=\mathcal{F}_{\eta}\left[\sqrt{2\pi}\delta_{2s\eta}(b)\right](a)
=\frac{1}{2s}e^{-i\frac{ab}{2s}}.$$
Therefore, with the definition of $F$ in \eqref{defF},
$$\begin{array}{rcl}
\widehat{\Phi(f)}(t,\xi)&=&e^{i\xi^2}\hat{u}_1(\xi)-\frac{i}{4\pi}\displaystyle\int_1^t\displaystyle\int\frac{1}{s}e^{-i\frac{ab}{2s}}
\mathcal{F}^{-1}_{\eta,\sigma}\left[F(s,\eta,\sigma,\xi)\right](a,b)dadbds\\
&=&e^{i\xi^2}\hat{u}_1(\xi)-\frac{i}{4\pi}\displaystyle\int_1^t\frac{1}{s}\displaystyle\int\mathcal{F}^{-1}_{\eta,\sigma}
\left[F(s,\eta,\sigma,\xi)\right](a,b)dadbds \\
&&-\frac{i}{4\pi}\displaystyle\int_1^t\frac{1}{s}\displaystyle\int\left(e^{-i\frac{ab}{2s}}-1\right)
\mathcal{F}^{-1}_{\eta,\sigma}\left[F(s,\eta,\sigma,\xi)\right](a,b)dadbds\\
&=&e^{i\xi^2}\hat{u}_1(\xi)-\frac{i\sqrt{2\pi}}{4\pi}\displaystyle\int_1^t\frac{1}{s}\left|\hat{g}(s,\xi)\right|^2\hat{f}(s,\xi)ds
+\displaystyle\int_1^tR(s,\xi)ds,
\end{array}$$
where \begin{equation}R(s,\xi)=R(\hat{g},\hat{g},\hat{f})(s,\xi):=-\frac{i}{4\pi s}\displaystyle\int\left(e^{-i\frac{ab}{2s}}-1\right)
\mathcal{F}^{-1}_{\eta,\sigma}\left[F(s,\eta,\sigma,\xi)\right](a,b)dadb.\label{Rdef}\end{equation}
Thus, we obtain
\begin{equation}\partial_t\widehat{\Phi(f)}(t,\xi)=-\frac{i}{2\sqrt{2\pi}t}\left|\hat{g}(t,\xi)\right|^2\hat{f}(t,\xi)+R(t,\xi).\label{prob}\end{equation}
The idea is to use the good behavior of $R$, which has a good decay in time. To take advantage of this decay, we introduce a last change of variable.
We set
\begin{equation}\begin{array}{rcl rcl}
B_g(t,\xi)&=&\exp\left(\dfrac{i}{2\sqrt{2\pi}}\displaystyle\int_1^t\frac{1}{s}\left|\hat{g}(s,\xi)\right|^2ds\right),
&B_f(t,\xi)=&\exp\left(\dfrac{i}{2\sqrt{2\pi}}\displaystyle\int_1^t\frac{1}{s}\left|\hat{f}(s,\xi)\right|^2ds\right),\\
\hat{w}_f(t,\xi)&=&\hat{f}(t,\xi)B_g(t,\xi), &\hat{w}_g(t,\xi)=&\hat{g}(t,\xi)B_f(t,\xi).
\end{array}\label{defw}\end{equation}
The main property of this change of variables is the conservation of the modulus of $f$ : $|\hat{w}_f(t,\xi)|=|\hat{f}(t,\xi)|$.
Thus, we have $\hat{f}=\widehat{\Phi(f)}$ if and only if $\hat{w}_f$ is a solution of the following differential equation: 
$$\partial_t\hat{w}_f(t,\xi)=B_g(t,\xi)R(t,\xi).$$
Applying Duhamel's formula a second time, we define the Duhamel form for $(\hat{w}_f,\hat{w}_g)$ (still using the same notation $\Phi$) by:
\begin{equation}\begin{cases}
\Phi_1(\hat{w}_f,\hat{w}_g)(t,\xi)&=\hat{w}_f^1(\xi)+\displaystyle\int_1^tB_g(s,\xi)R(\hat{g},\hat{g},\hat{f})(s,\xi)ds, \\
\Phi_2(\hat{w}_f,\hat{w}_g)(t,\xi)&=\hat{w}_g^1(\xi)+\displaystyle\int_1^tB_f(s,\xi)R(\hat{f},\hat{f},\hat{g})(s,\xi)ds, 
\end{cases}\label{Phi}\end{equation}
where $\hat{w}_f^1(\xi)=\hat{w}_f(1,\xi)$ and $\hat{w}_g^1(\xi)=\hat{w}_g(1,\xi)$.
They are the equations that we will treat to get the existence of the solutions. For that purpose, we are now able to state the key result of the article.
\begin{pro}\label{pro}
Assume $\|\hat{w}_f^1\|_{H^{1,0}_\xi}+\|\hat{w}_f^1\|_{H^{0,2n+1}_\xi}+\|\hat{w}_g^1\|_{H^{1,0}_\xi}+\|\hat{w}_g^1\|_{H^{0,2n+1}_\xi}\leq\varepsilon$,
then there exists a positive constant M such that
\begin{equation}\begin{array}{rcl}
\left\|\Phi_1(\hat{w}_f,\hat{w}_g)\right\|_{X_T}&\leq&\varepsilon+M\left\|\hat{w}_f\right\|_{X_T}\left\|\hat{w}_g\right\|_{X_T}^2(1+\left\|\hat{w}_f\right\|_{X_T}^4)
(1+\left\|\hat{w}_g\right\|_{X_T}^4),\\
\left\|\Phi_2(\hat{w}_f,\hat{w}_g)\right\|_{X_T}&\leq&\varepsilon+M\left\|\hat{w}_g\right\|_{X_T}\left\|\hat{w}_f\right\|_{X_T}^2(1+\left\|\hat{w}_g\right\|_{X_T}^4)
(1+\left\|\hat{w}_f\right\|_{X_T}^4).
\label{prop}\end{array}\end{equation}
\end{pro}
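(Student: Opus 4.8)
The plan is to estimate the three constituents of the $X_T$-norm of $\Phi_1(\hat w_f,\hat w_g)$ separately; the bound on $\Phi_2$ then follows by exchanging $f$ and $g$. The data term $\hat w_f^1$ contributes $\|\hat w_f^1\|_{L^\infty_\xi}+\|\hat w_f^1\|_{H^{1,0}_\xi}+\|\hat w_f^1\|_{H^{0,2n+1}_\xi}$, and since $L^\infty_\xi\hookleftarrow H^{1,0}_\xi$ in one dimension this is $\lesssim\varepsilon$, accounting for the first summand of \eqref{prop}. It remains to control the Duhamel integral $\int_1^tB_g(s,\xi)\,R(\hat g,\hat g,\hat f)(s,\xi)\,ds$ in each norm. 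Because $|B_g|\equiv1$, in the $L^\infty_\xi$-part and in the $H^{0,2n+1}_\xi$-part (no $\xi$-derivative is involved) the phase drops out and we are left with $\int_1^t\|R(s)\|_{L^\infty_\xi}\,ds$ and $\int_1^t\|R(s)\|_{H^{0,2n+1}_\xi}\,ds$; in the $H^{1,0}_\xi$-part we expand $\partial_\xi(B_gR)=(\partial_\xi B_g)\,R+B_g\,\partial_\xi R$, and from \eqref{defw} together with $|\hat g|=|\hat w_g|$ one gets $|\partial_\xi B_g(s,\xi)|\lesssim\int_1^s\tau^{-1}|\hat w_g(\tau,\xi)|\,|\partial_\xi\hat w_g(\tau,\xi)|\,d\tau$, hence $\|\partial_\xi B_g(s)\|_{L^2_\xi}\lesssim s^{\alpha}\|\hat w_g\|_{X_T}^2$, the time integral being absorbed into the growth $s^\alpha$ that the $X_T$-norm tolerates. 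So the proposition reduces to time-decay estimates for $R(\hat g,\hat g,\hat f)(s)$ and $\partial_\xi R(\hat g,\hat g,\hat f)(s)$ in $L^\infty_\xi$ and $L^2_\xi$, and for $\xi^jR(\hat g,\hat g,\hat f)(s)$ ($j\le2n+1$) in $L^2_\xi$.

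The heart of the proof is a bound of the form
\[
\|R(\hat g,\hat g,\hat f)(s)\|_{L^\infty_\xi}+\|\partial_\xi R(\hat g,\hat g,\hat f)(s)\|_{L^2_\xi}\lesssim s^{-1-\mu}\,\Pi,\qquad\|R(\hat g,\hat g,\hat f)(s)\|_{H^{0,2n+1}_\xi}\lesssim s^{-1-\mu+c\alpha}\,\Pi,
\]
for some $\mu\in(0,\tfrac14)$ and small $c>0$, where $\Pi$ denotes a product of a bounded number of $X_T$-norms of $\hat w_f,\hat w_g$ (one attached to $\hat f$, two to $\hat g$, together with bounded factors from the phases). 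To obtain it I would start from \eqref{Rdef}, extract the oscillatory gain by $|e^{-iab/2s}-1|\lesssim(|a||b|/s)^{\mu}$ --- or by the exact identity $e^{-iab/2s}-1=-\tfrac{iab}{2s}\int_0^1e^{-i\theta ab/2s}\,d\theta$ --- and then convert the weight $|a|^{\mu}|b|^{\mu}$ appearing in front of $\mathcal F^{-1}_{\eta,\sigma}[F]$ into fractional regularity of $F$ in $(\eta,\sigma)$ by Plancherel and Cauchy--Schwarz. The trilinear structure \eqref{defF} is then unravelled via the unimodular change of variables that turns $(\eta,\sigma)$ into the three arguments $\xi-\sigma$, $\xi-\eta-\sigma$, $\xi-\eta$; H\"older lets one keep two of the three factors in $L^2_\xi$ and the third in $L^\infty_\xi$ for the $L^\infty_\xi$- and $H^{1,0}_\xi$-estimates of $R$ (all three in $L^2_\xi$ for the pure $L^2_\xi$-estimate), while for the $H^{0,2n+1}_\xi$-estimate the weights $\xi^j$, $j\le 2n+1$, are spread by the binomial formula onto the three factors, each carrying one extra $s^{\alpha}$. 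Returning from $\hat f,\hat g$ to $\hat w_f,\hat w_g$ through $\hat f=\overline{B_g}\hat w_f$ and $\hat g=\overline{B_f}\hat w_g$ only introduces unimodular exponentials, and when a $\xi$-derivative falls on one of them a bounded multiple of $\|\hat w_f\|_{X_T}^{2}$ or $\|\hat w_g\|_{X_T}^{2}$; this is the source of the tail factors $(1+\|\hat w_f\|_{X_T}^{4})(1+\|\hat w_g\|_{X_T}^{4})$ in \eqref{prop}.

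Granting these estimates, \eqref{prop} follows by integrating in $s$ and taking the supremum over $T$. For the $L^\infty_\xi$-part one uses $\int_1^\infty s^{-1-\mu}\,ds<\infty$; for the $H^{1,0}_\xi$-part one uses in addition that $\|\partial_\xi B_g(s)\|_{L^2_\xi}\,\|R(s)\|_{L^\infty_\xi}\lesssim s^{\alpha-1-\mu}\,\Pi$ is integrable when $\alpha<\mu$, and divides by the tolerated factor $t^\alpha$; for the $H^{0,2n+1}_\xi$-part one argues likewise with $s^{-1-\mu+c\alpha}$ in place of $s^{-1-\mu}$, again using $\alpha$ small. Collecting the three contributions and reading off the powers of $\hat w_f,\hat w_g$ that appear --- always one copy of $\hat w_f$, two of $\hat w_g$, plus the bounded phase factors --- produces precisely the right-hand sides of \eqref{prop}, once $\varepsilon$ and $\alpha$ are chosen small enough for the relevant constants.

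The decay estimate for $R$, and only it, is the delicate step. The obstruction is that the natural distribution of the oscillatory gain $|a|^{\mu}|b|^{\mu}$ across $F$ wants to place more than one $\xi$-derivative on the middle factor $\overline{\hat g}(\xi-\eta-\sigma)$ --- the variable $\eta$ feeds both $\overline{\hat g}$ and $\hat f$, and $\sigma$ feeds both copies of $\hat g$, so this factor can be asked to absorb the weight coming from both directions at once --- whereas $X_T$ controls only one $\xi$-derivative of the profiles. Following \cite{KP}, I expect to overcome this by a finer decomposition: localizing in the sizes of $\eta$ and $\sigma$ (equivalently, splitting $R$ according to whether the phase $e^{2is\eta\sigma}$ admits an integration by parts) and, on each piece, trading the gain in $s$ against exactly one derivative of $\hat g$ or $\hat f$; the resulting free parameter fixes the admissible $\mu$, hence, downstream, the exponents $-\tfrac14+\nu$ and $-\tfrac18+\tfrac\nu2$ in Theorem~\ref{theo}. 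The remainder of the argument is bookkeeping of the $X_T$-norm and of the unimodular phases $B_f,B_g$.
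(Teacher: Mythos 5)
Your overall architecture is the right one and matches the paper's: split the $X_T$-norm into its three components, use $|B_g|\equiv 1$ so that the phase only matters when $\partial_\xi$ falls on it, bound $\|\partial_\xi B_g(s)\|_{L^2_\xi}\lesssim s^\alpha\|\hat w_g\|_{X_T}^2$ exactly as in \eqref{dxiB2}, and reduce everything to time-decay estimates for $R$, $\partial_\xi R$ and $\xi^{2n+1}R$. But the one estimate that actually carries the proposition --- the integrable $L^\infty_\xi$ decay $\|R(s)\|_{L^\infty_\xi}\lesssim s^{-1-\delta}\|\hat g\|_{H^{1,0}_\xi}^2\|\hat f\|_{H^{1,0}_\xi}$ of Lemma \ref{lemR} --- is precisely the step you do not prove: you flag it as ``the delicate step'' and defer to a dyadic localization in $\eta,\sigma$ and an integration by parts in the phase that you never carry out. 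Moreover, the obstruction you invoke to justify that detour is not real. The paper's proof is elementary: compute $\mathcal F^{-1}_{\eta,\sigma}[F](a,b)$ explicitly in physical space as $\frac1{\sqrt{2\pi}}e^{i(a+b)\xi}\int e^{-ix\xi}g(s,x-b)\overline g(s,x)f(s,x-a)\,dx$, distribute the weight via $|a|^\delta|b|^\delta\lesssim(|x-a|^\delta+|x|^\delta)(|x-b|^\delta+|x|^\delta)$, and note that even in the worst case, where both weights land on the \emph{same} factor, the total weight is $|x|^{2\delta}$ with $2\delta<\tfrac12$, so Cauchy--Schwarz in the form $\|\langle x\rangle^{2\delta}h\|_{L^1_x}\leq\|\langle x\rangle^{2\delta-1}\|_{L^2_x}\|\langle x\rangle h\|_{L^2_x}$ still costs only \emph{one} derivative of the corresponding Fourier transform. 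The restriction $\delta<\tfrac14$ exists exactly to make this work; no finer decomposition is needed, and the ``more than one derivative on the middle factor'' problem never arises.

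Two further points where your claims overshoot what is true (or at least what is available). First, you assert $\|\partial_\xi R(s)\|_{L^2_\xi}\lesssim s^{-1-\mu}\Pi$ and $\|R(s)\|_{H^{0,2n+1}_\xi}\lesssim s^{-1-\mu+c\alpha}\Pi$. These integrable rates are neither needed nor obtainable from the $X_T$-norm: a term such as $I(\hat g,\hat g,\partial_\xi\hat f)$ cannot be given extra decay without a second weight on $f$, i.e.\ a second $\xi$-derivative of $\hat f$, which $X_T$ does not control. The paper only proves the rate $s^{-1+\alpha}$ for these quantities (see \eqref{R2}, \eqref{dxiR2}, \eqref{xiR2}), via the decomposition $R=I+\tfrac1sN$ of \eqref{RIN} and the dispersive bound of Lemma \ref{lem}, and this suffices because the $H^{1,0}_\xi$ and $H^{0,2n+1}_\xi$ components of the $X_T$-norm carry the weight $t^{-\alpha}$; only the $L^\infty_\xi$ component, which has no such tolerance, requires the integrable decay of Lemma \ref{lemR}. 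You should restate your target estimates accordingly, otherwise you are setting yourself up to prove something false. Second, when you pass from $\hat f,\hat g$ to $\hat w_f,\hat w_g$ you must route the $H^{1,0}_\xi$ norms through \eqref{fh10}, which costs a factor $s^{\alpha}$ per profile; it is the budget $3\alpha<\delta$ (three profiles in $R$) that keeps the $L^\infty_\xi$ integral convergent, and this bookkeeping, together with the actual proof of Lemma \ref{lemR}, is what is missing from your write-up.
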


\begin{rem}
There are two main differences between this estimation and the Proposition 1.3 of Kato and Pusateri in \cite{KP}.
First, we are dealing here with a result about $w$, and not about $u$ directly. The reason is in the equation \eqref{prob}.
As long as we have no proof of the existence of $f$, we cannot take $\Phi(f)=f$ and do a legit change of variables in this equation.
But the existence of $f$ is equivalent to the existence of $w$, which is why we focus on $w$ here.
Secondly, the right-hand side in \cite{KP} is a cubic term. The apparition of $(1+\left\|\hat{w}_f\right\|_{X_T}^4)(1+\left\|\hat{w}_g\right\|_{X_T}^4)$ terms in 
this paper is due to the choice of the $X_T$-norm. In particular, the terms $\|\partial_{\xi}\hat{f}(s)\|_{L^2_\xi}$ and 
$\|\partial_{\xi}\hat{g}(s)\|_{L^2_\xi}$ are not controlled by the definition of the $X_T$-norm. In fact, as soon as we consider small solutions,
the right-hand side is very close to a cubic term.
\end{rem}

\subsection{Properties of the linear Schrödinger group.}

Before dealing with the computations on the $X_T$ norm, we introduce the following result of independent interest, about the linear Schrödinger group.
This lemma will be used in both proofs of Proposition \ref{pro} and Theorem \ref{theo}.

\begin{lem}\label{lem}
 For all $\varphi\in\mathcal S(\mathbb R)$ and for all $0<\beta<\frac{1}{4}$, we have
 $$\|e^{it\partial_{xx}}\varphi\|_{L^\infty_x}\lesssim\dfrac{1}{|t|^{\frac{1}{2}}}\|\hat{\varphi}\|_{L^\infty_\xi}
 +\dfrac{1}{|t|^{\frac{1}{2}+\beta}}\|\hat{\varphi}\|_{H^{1,0}_\xi}.$$ 
\end{lem}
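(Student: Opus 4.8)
The proof I have in mind is a direct stationary-phase estimate on the Fourier side. Completing the square in the oscillatory integral defining $e^{it\partial_{xx}}\varphi$ gives, with $\xi_0:=\tfrac{x}{2t}$,
\[e^{it\partial_{xx}}\varphi(x)=\frac{e^{ix^2/(4t)}}{\sqrt{2\pi}}\int_{\mathbb R}e^{-it(\xi-\xi_0)^2}\hat\varphi(\xi)\,d\xi.\]
I would then split $\hat\varphi(\xi)=\hat\varphi(\xi_0)+\bigl(\hat\varphi(\xi)-\hat\varphi(\xi_0)\bigr)$. For the first summand, the change of variables $\eta=\xi-\xi_0$ produces $\hat\varphi(\xi_0)\int_{\mathbb R}e^{-it\eta^2}\,d\eta$; this Fresnel integral is only conditionally convergent, but for $\varphi\in\mathcal S(\mathbb R)$ it is legitimate (insert a regulariser $e^{-\varepsilon\eta^2}$ and let $\varepsilon\to0$) and its modulus equals $\sqrt{\pi/|t|}$. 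Hence this piece contributes at most $|t|^{-1/2}\|\hat\varphi\|_{L^\infty_\xi}$, which is precisely the first term on the right-hand side of the lemma.

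The whole point is to estimate $J:=\int_{\mathbb R}e^{-it\eta^2}\bigl(\hat\varphi(\xi_0+\eta)-\hat\varphi(\xi_0)\bigr)\,d\eta$, and the only input on $\hat\varphi$ I want to use is the H\"older-$\tfrac12$ bound $|\hat\varphi(\xi_0+\eta)-\hat\varphi(\xi_0)|\le|\eta|^{1/2}\|\partial_\xi\hat\varphi\|_{L^2_\xi}$, which is Cauchy--Schwarz applied to $\hat\varphi(\xi_0+\eta)-\hat\varphi(\xi_0)=\int_{\xi_0}^{\xi_0+\eta}\partial_\xi\hat\varphi$. I would split the $\eta$-integral at the scale $\delta:=|t|^{-1/2}$. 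On $|\eta|\le\delta$ one bounds the oscillatory factor by $1$ and invokes the H\"older bound, which gives a contribution $\lesssim\|\partial_\xi\hat\varphi\|_{L^2_\xi}\int_{|\eta|\le\delta}|\eta|^{1/2}\,d\eta\lesssim|t|^{-3/4}\|\partial_\xi\hat\varphi\|_{L^2_\xi}$. On $|\eta|>\delta$ one integrates by parts through $e^{-it\eta^2}=\tfrac{-1}{2it\eta}\partial_\eta e^{-it\eta^2}$ --- the cut-off being chosen exactly so that $1/\eta$ is harmless on the region of integration, while the boundary term at $\eta=\pm\infty$ vanishes because $\varphi$ is Schwartz. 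The boundary term at $|\eta|=\delta$ is $\lesssim\tfrac{1}{|t|\delta}\,|\hat\varphi(\xi_0\pm\delta)-\hat\varphi(\xi_0)|\lesssim\tfrac{1}{|t|\delta^{1/2}}\|\partial_\xi\hat\varphi\|_{L^2_\xi}=|t|^{-3/4}\|\partial_\xi\hat\varphi\|_{L^2_\xi}$, and the remaining integral $\int_{|\eta|>\delta}\partial_\eta\!\bigl(\tfrac{\hat\varphi(\xi_0+\eta)-\hat\varphi(\xi_0)}{\eta}\bigr)e^{-it\eta^2}\,d\eta$, whose integrand equals $\bigl(\tfrac{\partial_\xi\hat\varphi(\xi_0+\eta)}{\eta}-\tfrac{\hat\varphi(\xi_0+\eta)-\hat\varphi(\xi_0)}{\eta^2}\bigr)e^{-it\eta^2}$, is controlled by Cauchy--Schwarz together with $\int_{|\eta|>\delta}\eta^{-2}\,d\eta\lesssim\delta^{-1}$ and $\int_{|\eta|>\delta}|\eta|^{-3/2}\,d\eta\lesssim\delta^{-1/2}$, once more yielding $\lesssim\tfrac{1}{|t|\delta^{1/2}}\|\partial_\xi\hat\varphi\|_{L^2_\xi}=|t|^{-3/4}\|\partial_\xi\hat\varphi\|_{L^2_\xi}$. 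Hence $|J|\lesssim|t|^{-3/4}\|\partial_\xi\hat\varphi\|_{L^2_\xi}$.

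Putting the two contributions together yields $\|e^{it\partial_{xx}}\varphi\|_{L^\infty_x}\lesssim|t|^{-1/2}\|\hat\varphi\|_{L^\infty_\xi}+|t|^{-3/4}\|\partial_\xi\hat\varphi\|_{L^2_\xi}$; since $\|\partial_\xi\hat\varphi\|_{L^2_\xi}\le\|\hat\varphi\|_{H^{1,0}_\xi}$ and $|t|^{-3/4}\le|t|^{-1/2-\beta}$ for $|t|\ge1$ and $0<\beta\le\tfrac14$, this is in fact stronger than the asserted inequality on the range $|t|\ge1$, which is the regime used throughout the paper. I expect the main obstacle to be organisational rather than conceptual: one must keep the cut-off strictly away from $\eta=0$ so that the negative powers of $\eta$ created by the integration by parts are integrable where they appear, carefully collect the several boundary and bulk error terms and verify that each is dominated by the single scale $\delta=|t|^{-1/2}$ against $\|\partial_\xi\hat\varphi\|_{L^2_\xi}$, and justify the conditionally convergent Fresnel integral by a regularisation argument. (As the computation shows, one actually obtains the remainder decay $|t|^{-3/4}$; the restriction $\beta<\tfrac14$ in the statement merely records that any exponent below $\tfrac14$ is admissible, with room to spare.)
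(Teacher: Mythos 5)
Your argument is correct, and it arrives at the same leading term as the paper --- after the change of variables your main contribution $\tfrac{1}{\sqrt{2\pi}}\hat\varphi(\xi_0)\sqrt{\pi/(it)}\,e^{ix^2/(4t)}=(2it)^{-1/2}e^{ix^2/(4t)}\hat\varphi(\tfrac{x}{2t})$ is exactly the term the paper isolates in \eqref{defA} --- but you estimate the remainder by a genuinely different route. The paper stays on the physical side: it writes $e^{it\partial_{xx}}\varphi=K_t\ast\varphi$, pulls out the quadratic phase, and bounds the remainder $A_\varphi$ via the elementary inequality $|e^{i\theta}-1|\lesssim|\theta|^{\beta}$ applied to $e^{iy^2/(4t)}-1$, followed by Cauchy--Schwarz against the weight $\langle x\rangle$; this is shorter, works uniformly for all $t\neq0$, but the requirement that $|x|^{2\beta}\langle x\rangle^{-1}\in L^2$ forces $\beta<\tfrac14$ strictly, so the endpoint decay is never attained. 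You instead perform a stationary-phase analysis on the Fourier side, splitting at the natural scale $\delta=|t|^{-1/2}$ and integrating by parts on the outer region; all your individual estimates check out (the H\"older-$\tfrac12$ bound, the boundary terms, and the two bulk terms each give $|t|^{-3/4}\|\partial_\xi\hat\varphi\|_{L^2_\xi}$), so you obtain the sharp endpoint rate $|t|^{-3/4}$ that the paper only approaches. The one caveat is that your conclusion implies the stated inequality only for $|t|\geq1$ (for $|t|<1$ the bound $|t|^{-3/4}$ is weaker than $|t|^{-1/2-\beta}$ when $\beta<\tfrac14$), whereas the lemma is phrased for arbitrary $t$; since the paper only ever invokes the lemma for times $s\in[1,t]$, this does not affect any application, but you should state the restriction explicitly if you substitute your proof.
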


\begin{proof}
The following proof is based on the proof of the more general Lemma in \cite{HN}.
The idea is to write~$e^{it\partial_{xx}}\varphi$ as an integral with a kernel.
Recall that for 
\begin{equation*} 
K_t(x)=(4i\pi t)^{-\frac12}e^{-\frac{x^2}{4it}},
\end{equation*}
we have $e^{it\partial_{xx}}\varphi=K_t\ast \varphi$.
Thus,
$$\begin{array}{rcl}
e^{it\partial_{xx}}\varphi(x)&=&\displaystyle\int_{\mathbb{R}}\varphi(y)K_t(x-y)dy \\
&=&(4i\pi t)^{-\frac12}\displaystyle\int_{\mathbb{R}}\varphi(y)e^{-\frac{(x-y)^2}{4it}}dy \\
&=&(4i\pi t)^{-\frac12}e^{i\frac{x^2}{4t}}\displaystyle\int_{\mathbb{R}}\varphi(y)e^{i\frac{y^2}{4t}}e^{-i\frac{xy}{2t}}dy \\
&=&(4i\pi t)^{-\frac12}e^{i\frac{x^2}{4t}}\left(\displaystyle\int_{\mathbb{R}}\varphi(y)e^{-i\frac{xy}{2t}}dy
+\displaystyle\int_{\mathbb{R}}\varphi(y)(e^{i\frac{y^2}{4t}}-1)e^{-i\frac{xy}{2t}}dy\right).\end{array}$$
We define $A_\varphi$ such that
\begin{equation}
e^{it\partial_{xx}}\varphi(x):=(2it)^{-\frac12}e^{i\frac{x^2}{4t}}\hat{\varphi}(\frac{x}{2t})+A_\varphi(t,x).
\label{defA}\end{equation}
The first part of the sum gives us the $L^\infty$ part of the Lemma. We now deal with the term $A_\varphi$ by using the fact that 
(with $0<\beta<\frac{1}{4}$):
\begin{equation}\left|e^{ix}-1\right|=2\left|\sin(\frac x2)\right|\lesssim x^{\beta}.\label{sin}\end{equation}
This trivial inequality gives us for $A$
\begin{equation*}\begin{array}{rcl}
\left\|A_\varphi(t)\right\|_{L^\infty_x}&\lesssim&\left|t\right|^{-\frac12}\left\|\left|x\right|^{2\beta}\left|t\right|^{-\beta}\varphi\right\|_{L^1_x}\\
&\lesssim&\left|t\right|^{-\frac12-\beta}\left\|\left|x\right|^{2\beta}\left<x\right>\left<x\right>^{-1}\varphi\right\|_{L^1_x} \\
&\lesssim&\left|t\right|^{-\frac12-\beta}\left\|\left|x\right|^{2\beta}\left<x\right>^{-1}\right\|_{L^2_x}\left\|\left<x\right>\varphi\right\|_{L^2_x} \\
&\lesssim&\left|t\right|^{-\frac12-\beta}\|\hat{\varphi}\|_{H^{1,0}_\xi}.
\end{array}\end{equation*}
This concludes the proof of the lemma.
\end{proof}

\section{Proof of the Proposition \ref{pro}}\label{sec3}

In this section, we prove the Proposition \ref{pro}, following the proof of the similar proposition in \cite{KP}.
We want to deal with the $X_T$ norms of $\hat{w}_f$ and $\hat{w}_g$. Here we write the computations for $\hat{w}_f$, obviously, the results are the same mutatis
mutandis for $\hat{w}_g$. We split the proof in three parts because of the several estimations needed in the $X_T$ norm.

\subsection{The decay of \texorpdfstring{$R$}{Lg}, one \texorpdfstring{$L^{\infty}$}{Lg} estimate.}
We first focus on $R$. We recall that
\begin{equation*}R(s,\xi)=R(\hat{g},\hat{g},\hat{f})(s,\xi)=-\frac{i}{4\pi s}\displaystyle\int\left(e^{-i\frac{ab}{2s}}-1\right)
\mathcal{F}^{-1}_{\eta,\sigma}\left[F(s,\eta,\sigma,\xi)\right](a,b)dadb,\end{equation*}
with 
\begin{equation*}F(s,\eta,\sigma,\xi):=\hat{g}(s,\xi-\sigma)\overline{\hat{g}}(s,\xi-\eta-\sigma)\hat{f}(s,\xi-\eta).\end{equation*}
We have the following lemma:
\begin{lem}\label{lemR}
For all $0<3\alpha<\delta<\frac{1}{4}$ we have 
$$\left|R(s,\xi)\right|\lesssim s^{-1-\delta}\left\|\hat{g}\right\|_{H^{1,0}_\xi}^2\|\hat{f}\|_{H^{1,0}_\xi}.$$
\end{lem}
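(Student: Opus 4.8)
The plan is to extract the gain $s^{-\delta}$ beyond the obvious $s^{-1}$ from the oscillatory factor $e^{-iab/2s}-1$, and then to trade the resulting weights in $(a,b)$ for regularity of $F$ in the dual variables $(\eta,\sigma)$ via Plancherel. Writing $G(a,b):=\mathcal F^{-1}_{\eta,\sigma}[F(s,\cdot,\cdot,\xi)](a,b)$, so that $R(s,\xi)=-\tfrac{i}{4\pi s}\int(e^{-iab/2s}-1)G(a,b)\,da\,db$, I would start from two elementary observations: by \eqref{sin} applied with exponent $\delta<\tfrac14$ one has $|e^{-iab/2s}-1|\lesssim |ab/2s|^{\delta}\lesssim s^{-\delta}|a|^{\delta}|b|^{\delta}$; and since multiplication by $|a|$ (resp.\ $|b|$) on the $a$- (resp.\ $b$-) side corresponds to a $\partial_\eta$ (resp.\ $\partial_\sigma$) derivative of $F$, any weighted $L^2_{a,b}$-bound for $G$ translates, by Plancherel, into a Sobolev bound for $F$ in $(\eta,\sigma)$.

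Combining this with Cauchy--Schwarz in $(a,b)$, and using that $\delta<\tfrac12$ makes $a\mapsto |a|^{\delta}\langle a\rangle^{-1}$ square-integrable on $\mathbb R$, I get
$$|R(s,\xi)|\lesssim s^{-1-\delta}\int |a|^{\delta}|b|^{\delta}|G(a,b)|\,da\,db\lesssim s^{-1-\delta}\,\big\||a|^{\delta}\langle a\rangle^{-1}\big\|_{L^2_a}\big\||b|^{\delta}\langle b\rangle^{-1}\big\|_{L^2_b}\,\big\|\langle a\rangle\langle b\rangle\,\mathcal F^{-1}_{\eta,\sigma}[F]\big\|_{L^2_{a,b}}\lesssim s^{-1-\delta}\,\big\|\langle D_\eta\rangle\langle D_\sigma\rangle F\big\|_{L^2_{\eta,\sigma}},$$
so it remains to bound $\|F\|_{L^2_{\eta,\sigma}}$, $\|\partial_\eta F\|_{L^2_{\eta,\sigma}}$, $\|\partial_\sigma F\|_{L^2_{\eta,\sigma}}$ and the mixed term $\|\partial_\eta\partial_\sigma F\|_{L^2_{\eta,\sigma}}$ (at fixed $s,\xi$) by $\|\hat g\|^2_{H^{1,0}_\xi}\|\hat f\|_{H^{1,0}_\xi}$. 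Since each factor of $F$ is a translate/reflection of $\hat g$, $\overline{\hat g}$ or $\hat f$, the Leibniz rule turns each of these into a sum of products of (derivatives of) the three factors. For each product I would perform the unimodular change of variables $(\eta,\sigma)\mapsto(p,q):=(\xi-\sigma,\xi-\eta-\sigma)$, under which the three arguments become $p$, $q$ and $\xi-p+q$; I then place the factor carrying the coupled argument $\xi-p+q$ in $L^\infty_\xi$ via the one-dimensional embedding $H^1(\mathbb R)\hookrightarrow L^\infty$ — this costs only an $H^{1,0}_\xi$ norm — after which the remaining two integrals decouple (one being translation-invariant) and are controlled by the $L^2_\xi$-norms of $\hat g(s)$, $\partial_\xi\hat g(s)$, $\hat f(s)$, $\partial_\xi\hat f(s)$. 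The hypothesis $3\alpha<\delta$ plays no role here; it is needed only downstream, to make $s^{-1-\delta}$ beat the $s^{3\alpha}$-type growth that the weighted components of the $X_T$-norm introduce when $\|\hat f(s)\|_{H^{1,0}_\xi}$ is re-expressed through $\|\hat w_f\|_{X_T}$.

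The step I expect to be the main obstacle is precisely the mixed term $\partial_\eta\partial_\sigma F$. A brute-force Leibniz expansion produces, besides harmless terms in which the two derivatives spread over two distinct factors (handled by the $L^\infty$/translation argument above with only $H^{1,0}_\xi$), one term in which both derivatives fall on the single factor $\overline{\hat g}(s,\xi-\eta-\sigma)$ — this factor depending on $(\eta,\sigma)$ only through $\eta+\sigma$ — producing $\overline{\hat g}''$ and thus seemingly demanding an $H^{2,0}_\xi$ bound on $\hat g$, more than the statement permits. Closing the estimate within $H^{1,0}_\xi$ therefore requires not discarding the oscillation so cheaply: one must keep $e^{-iab/2s}$ and exploit it by an integration-by-parts/van der Corput type analysis in $(a,b)$ — each integration by parts contributing a factor $\sim s/|b|$ (resp.\ $\sim s/|a|$) and moving structure off the kernel, after a decomposition of the $(a,b)$-plane into $\{|ab|\lesssim s\}$ and its complement — so that the full power $s^{-1-\delta}$ is recovered while only first-order derivatives of $F$ in $\eta$ and in $\sigma$ separately, never simultaneously on the same factor, are ever required; this is exactly the oscillatory-integral mechanism of Kato and Pusateri in \cite{KP}, and it is where the real work lies.
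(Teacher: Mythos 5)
Your first step---extracting the $s^{-\delta}$ gain from the oscillation via \eqref{sin}---matches the paper, but the way you then dispose of the weights $|a|^{\delta}|b|^{\delta}$ creates a gap that you yourself identify and do not close. Putting the full weights $\langle a\rangle\langle b\rangle$ on $G=\mathcal F^{-1}_{\eta,\sigma}F$ and passing by Plancherel to $\|\langle D_\eta\rangle\langle D_\sigma\rangle F\|_{L^2_{\eta,\sigma}}$ forces you to control the mixed derivative $\partial_\eta\partial_\sigma F$, and the term where both derivatives land on the single factor $\overline{\hat g}(s,\xi-\eta-\sigma)$ genuinely requires $\hat g\in H^{2,0}_\xi$, which the hypotheses do not provide (weakening the weights to $\langle a\rangle^{\theta}$ does not help: square-integrability of $|a|^{\delta}\langle a\rangle^{-\theta}$ forces $\theta>\delta+\tfrac12$, so the mixed term still asks for more than one derivative per variable on that factor). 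Your proposed repair---keeping $e^{-iab/2s}$ and integrating by parts in $(a,b)$ after splitting $\{|ab|\lesssim s\}$ from its complement---is left entirely unexecuted, so as written the argument is incomplete at exactly the point you flag as the main obstacle; moreover it is not the mechanism the paper actually uses, and no such oscillatory-integral analysis is needed.

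The paper avoids the problem by never performing a Cauchy--Schwarz in $(a,b)$ at all. It computes $\mathcal F^{-1}_{\eta,\sigma}F$ explicitly in physical space,
$$\mathcal F^{-1}_{\eta,\sigma}\left[F(s,\eta,\sigma,\xi)\right](a,b)=\frac{1}{\sqrt{2\pi}}\,e^{i(a+b)\xi}\displaystyle\int_{\mathbb R}e^{-ix\xi}\,g(s,x-b)\,\overline g(s,x)\,f(s,x-a)\,dx,$$
so that $a$ and $b$ enter only as translations of two \emph{distinct} factors. After the elementary splitting $|a|^{\delta}|b|^{\delta}\lesssim(|x-a|^{\delta}+|x|^{\delta})(|x-b|^{\delta}+|x|^{\delta})$, the triple integral over $(a,b,x)$ factors into products of three one-dimensional weighted $L^1_x$ norms carrying at most the fractional weight $|x|^{2\delta}$ with $2\delta<\tfrac12$, and each such norm is bounded via
$\|\langle x\rangle^{2\delta}\varphi\|_{L^1_x}\leq\|\langle x\rangle^{2\delta}\langle x\rangle^{-1}\|_{L^2_x}\|\langle x\rangle\varphi\|_{L^2_x}\lesssim\|\hat\varphi\|_{H^{1,0}_\xi}$.
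In other words, the product structure lets the two half-weights $|a|^{\delta}$ and $|b|^{\delta}$ be absorbed by a single full weight $\langle x\rangle$ on each factor \emph{separately}; this is precisely the information your joint $(a,b)$ Cauchy--Schwarz with a tensor weight discards. Replacing your second step by this explicit representation closes the proof within $H^{1,0}_\xi$, with no second derivatives and no stationary-phase work. (You are right, incidentally, that $3\alpha<\delta$ plays no role inside the lemma itself.)
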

\begin{rem}\label{rem4}
This lemma is the key of the estimates we will deal with.
\begin{itemize}
 \item The important result here is this small $\delta$ that ensures the integrability of 
$\left\|R(s,.)\right\|_{L^\infty_\xi}$. Moreover, this provides a good decay that will erase the behavior of the other parts, because $3\alpha<\delta$,
and ensures the existence of the solutions of our system \eqref{sys}.
 \item We see here why we choose the time $1$ for initial data in the system \eqref{sys}. Indeed, here is the main problem with the integral in $0$. 
Almost all the $\frac{1}{t}$ terms can be controlled in $0$ by changing the coefficients in the $X_T$ norm (for example by changing the $t$ in $1+t$),
but not this $\frac{1}{s^{1+\delta}}$ here. This is a consequence of the behavior of the solution of the linear Schrödinger equation near $0$.
Our purpose here is to highlight an asymptotic behavior, this is the reason why we just avoid this problem by taking a strictly positive time for the 
initial data.
 \item This lemma is satisfied for all $0<3\alpha<\delta<\frac{1}{4}$. The goal after the lemma is to take $\alpha$ small enough, and $\delta$ as big as
possible. Keeping this in mind, in the Subsection \ref{subfin}, it will be natural to consider that we can choose $\alpha$ and $\delta$ such that 
$0<4\alpha<\delta<\frac{1}{4}$.
\end{itemize}
\end{rem}

\begin{proof}
By definition we have 
$$\left|R(s,\xi)\right|\lesssim\frac{1}{s}\displaystyle\int\left|e^{-i\frac{ab}{2s}}-1\right|
\left|\mathcal{F}^{-1}_{\eta,\sigma}\left[F(s,\eta,\sigma,\xi)\right](a,b)\right|dadb.$$
Once again we use \eqref{sin} with $0<\delta<\frac{1}{4}$ to obtain 
$$\left|R(s,\xi)\right|\lesssim s^{-1-\delta}\displaystyle\int\left|a\right|^{\delta}\left|b\right|^{\delta}
\left|\mathcal{F}^{-1}_{\eta,\sigma}\left[F(s,\eta,\sigma,\xi)\right](a,b)\right|dadb.$$
Let's focus on $\mathcal{F}^{-1}_{\eta,\sigma}F$,
$$\begin{array}{rcl}
\mathcal{F}^{-1}_{\eta,\sigma}\left[F(s,\eta,\sigma,\xi)\right](a,b)&=&
\mathcal{F}^{-1}_{\eta,\sigma}\left[\hat{g}(s,\xi-\sigma)\overline{\hat{g}}(s,\xi-\eta-\sigma)\hat{f}(s,\xi-\eta)\right](a,b)\\
&=&\mathcal{F}^{-1}_{\eta}\left[\hat{f}(s,\xi-\eta)\mathcal{F}^{-1}_{\sigma}
\left(\hat{g}(s,\xi-\sigma)\overline{\hat{g}}(s,\xi-\eta-\sigma)\right)(b)\right](a).
\end{array}$$
But $\mathcal{F}^{-1}(fg)=\frac{1}{\sqrt{2\pi}}\mathcal{F}^{-1}(f)\ast\mathcal{F}^{-1}(g)$, therefore,
$$\begin{array}{rcl}
\mathcal{F}^{-1}_{\eta,\sigma}\left[F(s,\eta,\sigma,\xi)\right](a,b)&=&\mathcal{F}^{-1}_{\eta}\left[\hat{f}(s,\xi-\eta)
\frac{1}{\sqrt{2\pi}}\mathcal{F}^{-1}_{\sigma}\hat{g}(s,\xi-\sigma)\ast\mathcal{F}^{-1}_{\sigma}\overline{\hat{g}}(s,\xi-\eta-\sigma)(b)\right](a).
\end{array}$$
Moreover, $\mathcal{F}_\sigma^{-1}[\hat{g}(\xi-\sigma)](b)=e^{i\xi y}g(-b)$ and
$\mathcal{F}_\sigma^{-1}[\overline{\hat{g}(\xi-\sigma)}](b)=e^{i\xi b}\overline{g}(b)$,
$$\begin{array}{rcl}
\mathcal{F}^{-1}_{\eta,\sigma}\left[F(s,\eta,\sigma,\xi)\right](a,b)&=&\frac{1}{\sqrt{2\pi}}\mathcal{F}^{-1}_{\eta}\left[\hat{f}(s,\xi-\eta)
\left(e^{ib\xi}g(s,-b)\ast_be^{ib(\xi-\eta)}\overline{g}(s,b)\right)(b)\right](a)\\
&=&\frac{1}{\sqrt{2\pi}}\mathcal{F}^{-1}_{\eta}\left[\hat{f}(s,\xi-\eta)
\displaystyle\int_{\mathbb{R}}e^{iy\xi}g(s,-y)e^{i(b-y)(\xi-\eta)}\overline{g}(s,b-y)dy\right](a)\\
&=&\frac{1}{\sqrt{2\pi}}\displaystyle\int_{\mathbb{R}}e^{ib\xi}g(s,-y)\overline{g}(s,b-y)
\mathcal{F}^{-1}_{\eta}\left(e^{-i(b-y)\eta}\hat{f}(s,\xi-\eta)\right)(a)dy.
\end{array}$$
But $\mathcal{F}_\eta^{-1}[e^{-ib\eta}\hat{f}(\xi-\eta)](a)=e^{i\xi(a-b)}f(b-a)$,
$$\begin{array}{rcl}
\mathcal{F}^{-1}_{\eta,\sigma}\left[F(s,\eta,\sigma,\xi)\right](a,b)&=&
\frac{1}{\sqrt{2\pi}}\displaystyle\int_{\mathbb{R}}e^{ib\xi}g(s,-y)\overline{g}(s,b-y)e^{i\xi(y+a-b)}f(s,b-y-a)dy.
\end{array}$$
Finally, with the change of variables $x\leftrightarrow(b-y)$ we obtain
$$\begin{array}{rcl}
\mathcal{F}^{-1}_{\eta,\sigma}\left[F(s,\eta,\sigma,\xi)\right](a,b)&=&\frac{1}{\sqrt{2\pi}}\displaystyle\int_{\mathbb{R}}e^{ib\xi}g(s,x-b)\overline{g}(s,x)
e^{i\xi(a-x)}f(s,x-a)dx\\
&=&\frac{1}{\sqrt{2\pi}}e^{i(a+b)\xi}\displaystyle\int_{\mathbb{R}}e^{-ix\xi}g(s,x-b)\overline{g}(s,x)f(s,x-a)dx.\\
\end{array}$$
Therefore, back on $R$, we have 
$$\left|R(s,\xi)\right|\lesssim s^{-1-\delta}\displaystyle\int\left|a\right|^{\delta}\left|b\right|^{\delta}
\left|g(s,x-b)\right|\left|g(s,x)\right|\left|f(s,x-a)\right|dxdadb.$$
Using that 
$\left|a\right|^\delta\left|b\right|^\delta\lesssim(\left|x-a\right|^\delta+\left|x\right|^\delta)(\left|x-b\right|^\delta+\left|b\right|^\delta)$,
we obtain 
$$\begin{array}{rcl}
\left|R(s,\xi)\right|&\lesssim& s^{-1-\delta}\displaystyle\int\left|x-b\right|^\delta\left|g(s,x-b)\right|
\left|g(s,x)\right|\left|x-a\right|^\delta\left|f(s,x-a)\right|dxdadb\\
&&+s^{-1-\delta}\displaystyle\int\left|g(s,x-b)\right|
\left|x\right|^\delta\left|g(s,x)\right|\left|x-a\right|^\delta\left|f(s,x-a)\right|dxdadb\\
&&+s^{-1-\delta}\displaystyle\int\left|x-b\right|^\delta\left|g(s,x-b)\right|
\left|x\right|^\delta\left|g(s,x)\right|\left|f(s,x-a)\right|dxdadb\\
&&+s^{-1-\delta}\displaystyle\int\left|g(s,x-b)\right|\left|x\right|^{2\delta}\left|g(s,x)\right|\left|f(s,x-a)\right|dxdadb\\
&\lesssim&s^{-1-\delta}\left(\|\left|x\right|^\delta g\|_{L^1_x}\left\|g\right\|_{L^1_x}\|\left|x\right|^\delta f\|_{L^1_x}+
\left\|g\right\|_{L^1_x}\|\left|x\right|^\delta g\|_{L^1_x}\|\left|x\right|^\delta f\|_{L^1_x}\right.\\
&+&\left.\|\left|x\right|^\delta g\|_{L^1_x}\|\left|x\right|^\delta g\|_{L^1_x}\left\|f\right\|_{L^1_x}+
\|\left|x\right|^{2\delta} g\|_{L^1_x}\left\|g\right\|_{L^1_x}\left\|g\right\|_{L^1_x}\right).
\end{array}$$
Using the Cauchy-Schwarz inequality and the fact that $\delta<\frac14$, we get
$$\left\|\left<x\right>^{2\delta}f\right\|_{L^1_x}\leq\left\|\left<x\right>^{2\delta}\left<x\right>^{-1}\right\|_{L^2_x}
\left\|\left<x\right>f\right\|_{L^2_x}\lesssim\|\hat{f}\|_{H^{1,0}_\xi}.$$
Therefore,  
$$\left|R(s,\xi)\right|\lesssim s^{-1-\delta}\|\hat{f}\|_{H^{1,0}_\xi}\|\hat{g}\|_{H^{1,0}_\xi}^2.\\$$
\end{proof}

\subsection{Some \texorpdfstring{$L^2$}{Lg} estimates.}
By the previous estimation, we see that we first need to control the $\|\hat{f}\|_{H^{1,0}_\xi}$ term, which will occur in a lot of
our computations. By definition,
$$\begin{array}{rcl}
\|\hat{f}(t)\|_{H^{1,0}_\xi}&=&\|\hat{f}(t)\|_{L^2_\xi}+\|\partial_\xi\hat{f}(t)\|_{L^2_\xi}\\
&=&\|\hat{w}_f(t)\|_{L^2_\xi}+\|\partial_\xi(\hat{w}_f\overline{B_g})(t)\|_{L^2_\xi}\\
&\leq&\|\hat{w}_f(t)\|_{L^2_\xi}+\|\partial_\xi\hat{w}_f(t)\|_{L^2_\xi}+\|\hat{w}_f(t)\|_{L^\infty_\xi}\|\partial_\xi B_g(t)\|_{L^2_\xi}\\
&\leq&\|\hat{w}_f(t)\|_{H^{1,0}_\xi}+\|\hat{w}_f(t)\|_{L^\infty_\xi}\displaystyle\int_1^ts^{-1}\|\partial_\xi\hat{w}_g.\hat{w}_g(s)\|_{L^2_\xi}ds.
\end{array}$$
Thus, using the definition of the $X_T$-norm, we obtain
\begin{equation}\|\hat{f}(t)\|_{H^{1,0}_\xi}\lesssim t^\alpha\left\|\hat{w}_f\right\|_{X_T}(1+\left\|\hat{w}_g\right\|_{X_T}^2).\label{fh10}\end{equation}
We now have all the tools for studying $\|R(s)\|_{L^2_\xi}$.
By construction, we have that
\begin{equation}R(s,\xi)=\frac{i\sqrt{2\pi}}{4\pi}\frac1{s}|\hat{g}(s,\xi)|^2\hat{f}(s,\xi)
-i\mathcal{F}_\xi\left(e^{-is\partial_{xx}}(|e^{is\partial_{xx}}g(s,x)|^2e^{is\partial_{xx}}f(s,x))\right).\label{Rconstruction}\end{equation}
Using that $\|e^{is\partial_{xx}}\varphi\|_{L^2_\xi}=\|\varphi\|_{L^2_\xi}$ and the conservation of the $L^2$ norm of Fourier's transform, we get
$$\|R(s)\|_{L^2_\xi}\lesssim\frac1{s}\|\hat{g}(s)\|^2_{L^\infty_\xi}\|\hat{f}(s)\|_{L^2_\xi}
+\|e^{is\partial_{xx}}g(s)\|_{L^\infty_x}^2\|f(s)\|_{L^2_x}.$$
Thus, we use the Lemma \ref{lem} with $\alpha<\beta$ and the fact that $(a+b)^2\lesssim a^2+b^2$,
$$\begin{array}{rcl}
\|R(s)\|_{L^2_\xi}&\lesssim&s^{-1}\left(\|\hat{g}(s)\|^2_{L^\infty_\xi}\|\hat{f}(s)\|_{L^2_\xi}+
s^{-2\beta}\|\hat{g}(s)\|^2_{H^{1,0}_\xi}\|\hat{f}(s)\|_{L^2_\xi}\right)\\
&\lesssim&s^{-1}\left(\|\hat{w}_g(s)\|^2_{L^\infty_\xi}\|\hat{w}_f(s)\|_{L^2_\xi}+
s^{2(\alpha-\beta)}\|\hat{w}_g(s)\|^2_{X_T}(1+\|\hat{w}_f(s)\|^2_{X_T})^2\|\hat{w}_f(s)\|_{L^2_\xi}\right).
\end{array}$$
Thus we get
\begin{equation}\|R(s)\|_{L^2_\xi}\lesssim s^{-1+\alpha}\|\hat{w}_g\|^2_{X_T}\|\hat{w}_f\|_{X_T}(1+\|\hat{w}_f\|^4_{X_T}),\label{R2}\end{equation}
Now, let us deal with the derivative with respect to $\xi$. In the computation to obtain $\eqref{fh10}$, we already showed that
\begin{equation}\|\partial_\xi B_g(t)\|_{L^2_\xi}\lesssim t^\alpha\|\hat{w}_g\|^2_{X_T}.\label{dxiB2}\end{equation}
For $R$, we go back once again to its construction. We have
\begin{equation}\begin{array}{rcl}
R(s,\xi)&=&-\frac{i}{2\pi}\displaystyle\int e^{2is\eta\sigma}
\hat{g}(s,\xi-\sigma)\overline{\hat{g}}(s,\xi-\eta-\sigma)\hat{f}(s,\xi-\eta)d\sigma d\eta
+\frac{i\sqrt{2\pi}}{4\pi}\frac{1}{s}\left|\hat{g}(s,\xi)\right|^2\hat{f}(s,\xi)\\
&=:&I(\hat{g},\hat{g},\hat{f})(s,\xi)+\dfrac1sN(\hat{g},\hat{g},\hat{f})(s,\xi).\label{RIN}
\end{array}\end{equation}
Then, by linearity, 
\begin{equation}\partial_\xi R=I(\partial_\xi\hat{g},\hat{g},\hat{f})+I(\hat{g},\partial_\xi\hat{g},\hat{f})+I(\hat{g},\hat{g},\partial_\xi\hat{f})+
\frac1sN(\partial_\xi\hat{g},\hat{g},\hat{f})+\frac1sN(\hat{g},\partial_\xi\hat{g},\hat{f})+\frac1sN(\hat{g},\hat{g},\partial_\xi\hat{f}).\label{dxir}\end{equation}                                                                                                                                       
Obviously, the three terms in $I$ and $N$ play the same role. We will focus on one of each.
$$\begin{array}{rcl}
\|I(\partial_\xi\hat{g},\hat{g},\hat{f})(s)\|_{L^2_\xi}&\lesssim&
\|\mathcal{F}_\xi\left(e^{-is\partial_{xx}}(e^{is\partial_{xx}}xg(s,x)e^{-is\partial_{xx}}\overline{g}(s,x)e^{is\partial_{xx}}f(s,x))\right)\|_{L^2_\xi}\\
&\lesssim& \|e^{is\partial_{xx}}f(s)\|_{L^\infty_x}\|e^{is\partial_{xx}}g(s)\|_{L^\infty_x}\|xg(s)\|_{L^2_x}\\
&\lesssim&s^{-1}(\|\hat{f}(s)\|_{L^\infty_\xi}+
s^{-\beta}\|\hat{f}(s)\|_{H^{1,0}_\xi})(\|\hat{g}(s)\|_{L^\infty_\xi}+
s^{-\beta}\|\hat{g}(s)\|_{H^{1,0}_\xi})\|\partial_\xi \hat{g}(s)\|_{L^2_\xi}\\
&\lesssim&s^{-1+\alpha}\left\|\hat{w}_f\right\|_{X_T}(1+\left\|\hat{w}_g\right\|_{X_T}^2)\left\|\hat{w}_g\right\|^2_{X_T}(1+\left\|\hat{w}_f\right\|_{X_T}^4).
\end{array}$$
For $N$, we have
$$\begin{array}{rcl}
\|s^{-1}N(\partial_\xi\hat{g},\hat{g},\hat{f})(s)\|_{L^2_\xi}&\lesssim&s^{-1}\|\hat{f}(s)\|_{L^\infty_\xi}\|\hat{g}(s)\|_{L^\infty_\xi}
\|\partial_{\xi}\hat{g}(s)\|_{L^2_\xi}\\
&\lesssim&s^{-1+\alpha}\left\|\hat{w}_f\right\|_{X_T}\left\|\hat{w}_g\right\|^2_{X_T}(1+\left\|\hat{w}_f\right\|_{X_T}^2).
\end{array}$$
Thus, we can conclude that
\begin{equation}\|\partial_\xi R(s,\xi)\|_{L^2_\xi}\lesssim s^{-1+\alpha}
\left\|\hat{w}_f\right\|_{X_T}\left\|\hat{w}_g\right\|^2_{X_T}(1+\left\|\hat{w}_g\right\|_{X_T}^2)(1+\left\|\hat{w}_f\right\|_{X_T}^4).\label{dxiR2}\end{equation}
The computations are almost the same for the $H^{0,2n+1}_\xi$ part. Since $|B(s,\xi)|=1$, we just focus on $R$. With the decomposition \eqref{RIN}, we have
$$\left|\xi^{2n+1}R(s,\xi)\right|\leq \left|\xi^{2n+1}I(\hat{g},\hat{g},\hat{f})(s,\xi)\right|+\dfrac1s\left|\xi^{2n+1}N(\hat{g},\hat{g},\hat{f})(s,\xi)\right|.$$
For the $I$ term, using that by convexity of $x\mapsto x^{2n+1}$ on $\mathbb{R}^+$, we have 
\begin{equation}\left|\xi\right|^{2n+1}\lesssim\left|\xi-\sigma\right|^{2n+1}+\left|\xi-\eta\right|^{2n+1}+
\left|\xi-\sigma-\eta\right|^{2n+1}.\label{convexity}\end{equation}
We obtain
$$\left|\xi^{2n+1}I(\hat{g},\hat{g},\hat{f})(s,\xi)\right|\leq\left|I(\xi^{2n+1}\hat{g},\hat{g},\hat{f})(s,\xi)\right|+
\left|I(\hat{g},\xi^{2n+1}\hat{g},\hat{f})(s,\xi)\right|+\left|I(\hat{g},\hat{g},\xi^{2n+1}\hat{f})(s,\xi)\right|.$$
Let's deal for example with the third part, we have
$$\begin{array}{rcl}
\|I(\hat{g},\hat{g},\xi^{2n+1}\hat{f})(s,\xi)\|_{L^2_\xi}&\lesssim&\|e^{is\partial_{xx}}g(s)\|^2_{L^\infty_x}\|\partial_x^{2n+1}f(s)\|_{L^2_x}\\
&\lesssim&s^{-1}(\|\hat{g}(s)\|_{L^\infty_\xi}+s^{-\beta}\|\hat{g}(s)\|_{H^{1,0}_\xi})^2\|\xi^{2n+1} \hat{f}(s)\|_{L^2_\xi}\\
&\lesssim&s^{-1+\alpha}\left\|\hat{w}_f\right\|_{X_T}\left\|\hat{w}_g\right\|^2_{X_T}(1+\left\|\hat{w}_f\right\|_{X_T}^4).
\end{array}$$
Doing the same for the two others parts, we conclude for the $I$ part by
$$\|\xi^{2n+1}I(\hat{g},\hat{g},\hat{f})(s)\|_{L^2_\xi}\lesssim
s^{-1+\alpha}\left\|\hat{w}_f\right\|_{X_T}\left\|\hat{w}_g\right\|^2_{X_T}(1+\left\|\hat{w}_g\right\|_{X_T}^2)(1+\left\|\hat{w}_f\right\|_{X_T}^4).$$
The $N$ term is simpler, cause we remark that 
$\left|\xi^{2n+1}N(\hat{g},\hat{g},\hat{f})(s,\xi)\right|=\left|N(\xi^{2n+1}\hat{g},\hat{g},\hat{f})(s,\xi)\right|$. Thus we have that
$$\begin{array}{rcl}
\|s^{-1}\xi^{2n+1}N(\hat{g},\hat{g},\hat{f})(s)\|_{L^2_\xi}&\lesssim&s^{-1}\|\hat{f}(s)\|_{L^\infty_\xi}\|\hat{g}(s)\|_{L^\infty_\xi}
\|\xi^{2n+1}\hat{g}(s)\|_{L^2_\xi}\\
&\lesssim&s^{-1+\alpha}\left\|\hat{w}_f\right\|_{X_T}\left\|\hat{w}_g\right\|^2_{X_T}.
\end{array}$$
Finally we conclude for $R$ that
\begin{equation}\|\xi^{2n+1} R(s)\|_{L^2_\xi}\lesssim s^{-1+\alpha}\|\hat{w}_g\|^2_{X_T}\|\hat{w}_f\|_{X_T}(1+\left\|\hat{w}_g\right\|_{X_T}^2)
(1+\|\hat{w}_f\|^4_{X_T}).\label{xiR2}\end{equation}

\subsection{Conclusion of the proof.} 
We have to compile our estimates with the norms that defined the $X_T$ norm.
We remind that $$\|\hat{w}_f\|_{X_T}:=\|\hat{w}_f\|_{L^\infty_TL^\infty_\xi}+\|t^{-\alpha}\hat{w}_f\|_{L^\infty_TH^{1,0}_\xi}
+\|t^{-\alpha}\hat{w}_f\|_{L^\infty_TH^{0,2n+1}_\xi},$$
and $$\Phi_1(\hat{w}_f,\hat{w}_g)(t,\xi):=\hat{w}_f^1(\xi)+\displaystyle\int_1^tB_g(s,\xi)R(s,\xi)ds.$$
For the $L^{\infty}$ part, we use the Lemma \ref{lemR}, the equation \eqref{fh10} and the fact that we have $3\alpha-\delta<0$,
$$\begin{array}{rcl}  
\|\Phi_1(\hat{w}_f,\hat{w}_g)(t)\|_{L^\infty_\xi}&\leq&\|\hat{w}_f^1\|_{L^\infty_\xi}+\displaystyle\int_1^t\|R(s)\|_{L^\infty_\xi}ds\\
&\lesssim&\varepsilon+\left\|\hat{w}_f\right\|_{X_T}\left\|\hat{w}_g\right\|_{X_T}^2(1+\left\|\hat{w}_g\right\|_{X_T}^2)
(1+\left\|\hat{w}_f\right\|_{X_T}^4).
\end{array}$$
For the $H^{1,0}$ part, we use the Lemma \ref{lemR}, the equations \eqref{R2}, \eqref{dxiB2} and \eqref{dxiR2},
$$\begin{array}{rcl}  
\|t^{-\alpha}\Phi_1(\hat{w}_f,\hat{w}_g)(t)\|_{H^{1,0}_\xi}&\leq&\|\hat{w}_f^1\|_{H^{1,0}_\xi}+t^{-\alpha}\displaystyle\int_1^t
\left(\|R(s)\|_{H^{1,0}_\xi}+\|R(s)\|_{L^\infty_\xi}\|\partial_\xi B_g(s)\|_{L^2_\xi}\right)ds\\
&\lesssim&\varepsilon+\left\|\hat{w}_f\right\|_{X_T}\left\|\hat{w}_g\right\|_{X_T}^2(1+\left\|\hat{w}_f\right\|_{X_T}^4)
(1+\left\|\hat{w}_g\right\|_{X_T}^4).
\end{array}$$
The $H^{0,2n+1}$ part is easier, just using \eqref{xiR2} we get
$$\|t^{-\alpha}\Phi_1(\hat{w}_f,\hat{w}_g)(t)\|_{H^{0,2n+1}_\xi}\lesssim
\varepsilon+\left\|\hat{w}_f\right\|_{X_T}\left\|\hat{w}_g\right\|_{X_T}^2(1+\left\|\hat{w}_f\right\|_{X_T}^4)(1+\left\|\hat{w}_g\right\|_{X_T}^2).$$
Finally, we have, with a constant $M>0$,
$$\left\|\Phi_1(\hat{w}_f,\hat{w}_g)\right\|_{X_T}\leq\varepsilon+M\left\|\hat{w}_f\right\|_{X_T}\left\|\hat{w}_g\right\|_{X_T}^2(1+\left\|\hat{w}_f\right\|_{X_T}^4)
(1+\left\|\hat{w}_g\right\|_{X_T}^4).$$
The other equation is obtained by the same way by symmetry of the roles of $u$ and $v$.
\qed

\section{Proof of the Theorem \ref{theo}}\label{sec4}

We are now able to use the proposition in order to prove the Theorem \ref{theo}. First, this gives us a local existence, by a fixed point argument in a 
suitable space. Moreover, by a continuity argument, we prove a global one.
In the two last parts of this section, we use our estimates to get the $L^\infty$ decay and the scattering.

\subsection{Local existence and uniqueness of the solution.}
For the introduction of the function $\Phi$, we need to take care of the notations while considering two pairs of solutions.
We consider the application $\Phi:(\hat{w}_f,\hat{w}_g)\mapsto\left(\Phi_1(\hat{w}_f,\hat{w}_g),\Phi_2(\hat{w}_f,\hat{w}_g)\right)$ with $\Phi_1$ and $\Phi_2$ defined in \eqref{Phi}.
The idea is to apply a fixed point argument to~$\Phi$ in 
$$E:=\left\{(\hat{w}_f,\hat{w}_g)\in X_T, \|\hat{w}_f\|_{X_T}<2\varepsilon, \|\hat{w}_g\|_{X_T}<2\varepsilon \right\},$$ 
that we endow with the norm $\|(\hat{w}_f,\hat{w}_g)\|_E=\|\hat{w}_f\|_{X_T}+\|\hat{w}_g\|_{X_T}$. \\
Using the Proposition \ref{pro}, we have that 
\begin{equation*}\begin{cases}
\left\|\Phi_1(\hat{w}_f,\hat{w}_g)\right\|_{X_T}&\leq\varepsilon+M\left\|\hat{w}_f\right\|_{X_T}\left\|\hat{w}_g\right\|_{X_T}^2(1+\left\|\hat{w}_f\right\|_{X_T}^4)
(1+\left\|\hat{w}_g\right\|_{X_T}^4),\\
\left\|\Phi_2(\hat{w}_f,\hat{w}_g)\right\|_{X_T}&\leq\varepsilon+M\left\|\hat{w}_g\right\|_{X_T}\left\|\hat{w}_f\right\|_{X_T}^2(1+\left\|\hat{w}_g\right\|_{X_T}^4)
(1+\left\|\hat{w}_f\right\|_{X_T}^4).
\end{cases}\end{equation*}
For $\varepsilon$ small enough, 
\begin{equation}8M\varepsilon^3(1+16\varepsilon^4)^2\leq\frac12\varepsilon\label{epsilon}.\end{equation}
In this case, $E$ is invariant under the action of $\Phi$. For this invariance, we could have chosen just $\varepsilon$ instead of $\frac12\varepsilon$
in the RHS of \eqref{epsilon}. The result would have been the same, but we will need a deeper accuracy for the global existence in the next part.
We now have to show that $\Phi$ is a contraction, therefore we consider two pairs of solutions.
Let $(\hat{w}_{f_1},\hat{w}_{g_1}),(\hat{w}_{f_2},\hat{w}_{g_2})\in E$,
$$\|\Phi(\hat{w}_{f_1},\hat{w}_{g_1})-\Phi(\hat{w}_{f_2},\hat{w}_{g_2})\|_E=
\|\Phi_1(\hat{w}_{f_1},\hat{w}_{g_1})-\Phi_1(\hat{w}_{f_2},\hat{w}_{g_2})\|_{X_T}+\|\Phi_2(\hat{w}_{f_1},\hat{w}_{g_1})-
\Phi_2(\hat{w}_{f_2},\hat{w}_{g_2})\|_{X_T}.$$
We do the computations for $\Phi_1$, they are \textit{mutatis mutandis} the same for $\Phi_2$. 
$$\begin{array}{rcl}
\left(\Phi_1(\hat{w}_{f_1},\hat{w}_{g_1})-\Phi_1(\hat{w}_{f_2},\hat{w}_{g_2})\right)(t,\xi)&=&
\displaystyle\int_1^tB_{g_1}(s,\xi)R_1(s,\xi)ds-\displaystyle\int_1^tB_{g_2}(s,\xi)R_2(s,\xi)ds\\
&=&\displaystyle\int_1^tB_{g_1}(R_1-R_2)(s,\xi)ds+\displaystyle\int_1^tR_2(B_{g_1}-B_{g_2})(s,\xi)ds.
\end{array}$$
As we saw in the proof of the Proposition \ref{pro}, the role of $\|\hat{f_1}(t)-\hat{f_2}(t)\|_{H^{1,0}_\xi}$ will be predominant, but we have to do some 
preliminary computations before. We first focus on the coupling effect of $B_{g_1}-B_{g_2}$, which give us in a second time estimates for 
$\hat{f}_1-\hat{f}_2$. In the third part of the proof, we will focus on the effect of the $R_1-R_2$ terms. 
To be as clear as possible, we split the proof by announcing the estimates we are going to prove.

 \subsubsection{The \texorpdfstring{$B_{g_1}-B_{g_2}$}{Lg} estimates.}

First of all, let us rewrite $B_{g_1}-B_{g_2}$. \\ 
For $a,b\in\mathbb{R}$, $e^{ia}-e^{ib}=2i\sin\left(\frac{a-b}{2}\right)e^{i\left(\frac{a+b}2\right)},$ 
so we get
$$(B_{g_1}-B_{g_2})(t,\xi)=2i\sin\left(\displaystyle\int_1^t\frac{|\hat{w}_{g_1}(s,\xi)|^2-|\hat{w}_{g_2}(s,\xi)|^2}{2s}ds\right)
e^{i\left(\displaystyle\int_1^t\frac{|\hat{w}_{g_1}(s,\xi)|^2+|\hat{w}_{g_2}(s,\xi)|^2}{2s}ds\right)}.$$
Thus, we have the following relations:
\begin{itemize}
 \item $\|(B_{g_1}-B_{g_2})(t)\|_{L^{\infty}_\xi}\lesssim\ln(t)\varepsilon\|\hat{w}_{g_1}-\hat{w}_{g_2}\|_{X_T}.$ \\
Actually, we have
$$\begin{array}{rcl}
\|(B_{g_1}-B_{g_2})(t)\|_{L^{\infty}_\xi}&\lesssim&\|\displaystyle\int_1^ts^{-1}(|\hat{w}_{g_1}(s)|^2-|\hat{w}_{g_2}(s)|^2)ds
\|_{L^\infty_\xi}\\
&\lesssim&\displaystyle\int_1^ts^{-1}\|\hat{w}_{g_1}(s)-\hat{w}_{g_2}(s)\|_{L^\infty_\xi}
(\|\hat{w}_{g_1}(s)\|_{L^\infty_\xi}+\|\hat{w}_{g_2}(s)\|_{L^\infty_\xi})ds.
\end{array}$$
 \item $\|\partial_\xi(B_{g_1}-B_{g_2})(t)\|_{L^2_\xi}\lesssim t^\alpha\varepsilon(1+\varepsilon^2\ln(t))\|\hat{w}_{g_1}-\hat{w}_{g_2}\|_{X_T}.$\\
For this relation, we need more computations due to the derivative. We have
$$\begin{array}{rcl}
&&\|\partial_\xi(B_{g_1}-B_{g_2})(t)\|_{L^2_\xi}\leq\|\partial_\xi\sin\left(\displaystyle\int_1^t\frac{|\hat{w}_{g_1}(s)|^2-|\hat{w}_{g_2}(s)|^2}{2s}ds\right)
\|_{L^2_\xi}\\
&&+\|\sin\left(\displaystyle\int_1^t\frac{|\hat{w}_{g_1}(s)|^2-|\hat{w}_{g_2}(s)|^2}{2s}ds\right)\partial_\xi
e^{i\left(\displaystyle\int_1^t\frac{|\hat{w}_{g_1}(s)|^2+|\hat{w}_{g_2}(s)|^2}{2s}ds\right)}\|_{L^2_\xi}.
\end{array}$$
We consider separately the two parts of the sum. For the first part,
$$\begin{array}{rcl}
I_1&=&\|\partial_\xi\sin\left(\displaystyle\int_1^t\frac{|\hat{w}_{g_1}(s)|^2-|\hat{w}_{g_2}(s)|^2}{2s}ds\right)
\|_{L^2_\xi}\\
&\lesssim&\|\displaystyle\int_1^ts^{-1}\partial_\xi(|\hat{w}_{g_1}(s)|^2-|\hat{w}_{g_2}(s)|^2)ds\|_{L^2_\xi}\\
&\lesssim&\displaystyle\int_1^ts^{-1}\left((\|\partial_\xi\hat{w}_{g_1}(s)\|_{L^2_\xi}+\|\partial_\xi\hat{w}_{g_2}(s)\|_{L^2_\xi})
\|\hat{w}_{g_1}(s)-\hat{w}_{g_2}(s)\|_{L^\infty_\xi}\right.\\
&&\left.+\quad(\|\hat{w}_{g_1}(s)\|_{L^\infty_\xi}+\|\hat{w}_{g_2}(s)\|_{L^\infty_\xi})
\|\partial_\xi(\hat{w}_{g_1}(s)-\hat{w}_{g_2}(s))\|_{L^2_\xi}\right)ds\\
&\lesssim&t^\alpha\varepsilon\|\hat{w}_{g_1}-\hat{w}_{g_2}\|_{X_T}.
\end{array}$$
For the second one, using the same relation for the coupled part, 
$$\begin{array}{rcl}
I_2&=&\|\sin\left(\displaystyle\int_1^t\frac{|\hat{w}_{g_1}(s)|^2-|\hat{w}_{g_2}(s)|^2}{2s}ds\right)\partial_\xi
e^{i\left(\displaystyle\int_1^t\frac{|\hat{w}_{g_1}(s)|^2+|\hat{w}_{g_2}(s)|^2}{2s}ds\right)}\|_{L^2_\xi}\\
&\lesssim&\displaystyle\int_1^ts^{-1}\|\hat{w}_{g_1}(s)-\hat{w}_{g_2}(s)\|_{L^\infty_\xi}
(\|\hat{w}_{g_1}(s)\|_{L^\infty_\xi}+\|\hat{w}_{g_2}(s)\|_{L^\infty_\xi})ds\\
&&\times \displaystyle\int_1^ts^{-1}(\|\partial_\xi\hat{w}_{g_1}(s)\|_{L^2_\xi}\|\hat{w}_{g_1}(s)\|_{L^\infty_\xi}
+\|\partial_\xi\hat{w}_{g_2}(s)\|_{L^2_\xi}\|\hat{w}_{g_2}(s)\|_{L^\infty_\xi})ds\\
&\lesssim&t^\alpha\ln(t)\varepsilon^3\|\hat{w}_{g_1}-\hat{w}_{g_2}\|_{X_T}.
\end{array}$$
\end{itemize}

\subsubsection{The \texorpdfstring{$\hat{f}_1-\hat{f}_2$}{Lg} estimates.}

Using the previous estimates, we now deal with the coupling effect of $\hat{f}_1-\hat{f}_2$. First, we see that
$$\hat{f}_1-\hat{f}_2=\hat{w}_{f_1}\overline{B}_{g_1}-\hat{w}_{f_2}\overline{B}_{g_2}=
\hat{w}_{f_1}(\overline{B}_{g_1}-\overline{B}_{g_2})+\overline{B}_{g_2}(\hat{w}_{f_1}-\hat{w}_{f_2}).$$
Thanks to this decomposition, we deal with the $L^\infty$, $H^{1,0}$ and $H^{0,2n+1}$ norms of $\hat{f}_1-\hat{f}_2$.
\begin{itemize}
 \item $\|(\hat{f}_1-\hat{f}_2)(t)\|_{L^\infty_\xi}\lesssim
\ln(t)\varepsilon^2\|\hat{w}_{g_1}-\hat{w}_{g_2}\|_{X_T}+\|\hat{w}_{f_1}-\hat{w}_{f_2}\|_{X_T}.$\\
We prove this inequality by using the estimation of $\|(B_{g_1}-B_{g_2})(t)\|_{L^{\infty}_\xi}$. We have
$$\begin{array}{rcl}
\|(\hat{f}_1-\hat{f}_2)(t)\|_{L^\infty_\xi}&\lesssim&\|\hat{w}_{f_1}(t)\|_{L^\infty_\xi}\|(B_{g_1}-B_{g_2})(t)\|_{L^{\infty}_\xi}
+\|(\hat{w}_{f_1}-\hat{w}_{f_2})(t)\|_{L^\infty_\xi}.
\end{array}$$
 \item $\|(\hat{f}_1-\hat{f}_2)(t)\|_{L^2_\xi}\lesssim
t^\alpha\ln(t)\varepsilon^2\|\hat{w}_{g_1}-\hat{w}_{g_2}\|_{X_T}+t^\alpha\|\hat{w}_{f_1}-\hat{w}_{f_2}\|_{X_T}.$\\
Indeed, by the exact same way but with the $L^2$ control of the $\hat{w}$ terms,
$$\begin{array}{rcl}
\|(\hat{f}_1-\hat{f}_2)(t)\|_{L^2_\xi}&\lesssim&\|\hat{w}_{f_1}(t)\|_{L^2_\xi}\|(B_{g_1}-B_{g_2})(t)\|_{L^{\infty}_\xi}
+\|(\hat{w}_{f_1}-\hat{w}_{f_2})(t)\|_{L^2_\xi}.
\end{array}$$
  \item $\|\xi^{2n+1}(\hat{f}_1-\hat{f}_2)(t)\|_{L^2_\xi}\lesssim
t^\alpha\ln(t)\varepsilon^2\|\hat{w}_{g_1}-\hat{w}_{g_2}\|_{X_T}+t^\alpha\|\hat{w}_{f_1}-\hat{w}_{f_2}\|_{X_T}.$\\
This is the same estimation and exactly the same computation because 
$$\begin{array}{rcl}
\|\xi^{2n+1}(\hat{f}_1-\hat{f}_2)(t)\|_{L^2_\xi}&\lesssim&\|\xi^{2n+1}\hat{w}_{f_1}(t)\|_{L^2_\xi}\|(B_{g_1}-B_{g_2})(t)\|_{L^{\infty}_\xi}
+\|\xi^{2n+1}(\hat{w}_{f_1}-\hat{w}_{f_2})(t)\|_{L^2_\xi}.
\end{array}$$
 \item $\|\partial_\xi(\hat{f}_1-\hat{f}_2)(t)\|_{L^2_\xi}\lesssim
t^\alpha\ln(t)\varepsilon^2\|\hat{w}_{g_1}-\hat{w}_{g_2}\|_{X_T}+t^\alpha\|\hat{w}_{f_1}-\hat{w}_{f_2}\|_{X_T}.$\\
For this one we have
$$\begin{array}{rcl}\|\partial_\xi(\hat{f}_1-\hat{f}_2)(t)\|_{L^2_\xi}&\lesssim&\|\partial_\xi\hat{w}_{f_1}(t)\|_{L^2_\xi}\|(B_{g_1}-B_{g_2})(t)\|_{L^{\infty}_\xi}
+\|\hat{w}_{f_1}(t)\|_{L^{\infty}_\xi}\|\partial_\xi(B_{g_1}-B_{g_2})(t)\|_{L^2_\xi}\\
&&+\:\|\partial_\xi B_{g_2}(t)\|_{L^2_\xi}\|(\hat{w}_{f_1}-\hat{w}_{f_2})(t)\|_{L^{\infty}_\xi}
+\|\partial_\xi(\hat{w}_{f_1}-\hat{w}_{f_2})(t)\|_{L^2_\xi}.\\
\end{array}$$
Here, we can use the previous relations of this part, the equation \eqref{dxiB2}, and the fact that $\varepsilon$ is small to obtain the desired estimate.
\end{itemize}
Computing these estimates, we finally have
\begin{equation}\begin{array}{rcl}
\|(\hat{f}_1-\hat{f}_2)(t)\|_{L^\infty_\xi}&\lesssim&
\ln(t)\varepsilon^2\|\hat{w}_{g_1}-\hat{w}_{g_2}\|_{X_T}+\|\hat{w}_{f_1}-\hat{w}_{f_2}\|_{X_T},\\
\|(\hat{f}_1-\hat{f}_2)(t)\|_{H^{1,0}_\xi}&\lesssim&
t^\alpha\ln(t)\varepsilon^2\|\hat{w}_{g_1}-\hat{w}_{g_2}\|_{X_T}+t^\alpha\|\hat{w}_{f_1}-\hat{w}_{f_2}\|_{X_T},\\
\|(\hat{f}_1-\hat{f}_2)(t)\|_{H^{0,2n+1}_\xi}&\lesssim&
t^\alpha\ln(t)\varepsilon^2\|\hat{w}_{g_1}-\hat{w}_{g_2}\|_{X_T}+t^\alpha\|\hat{w}_{f_1}-\hat{w}_{f_2}\|_{X_T}.
\end{array}\label{f1-f2}\end{equation}

\subsubsection{The \texorpdfstring{$R_1-R_2$}{Lg} estimates.}

\begin{itemize}
 \item $\|(R_1-R_2)(t)\|_{L^{\infty}_\xi}\lesssim 
 t^{-1-\delta+3\alpha}\varepsilon^2(\ln(t)\varepsilon^2+1)(\|\hat{w}_{g_1}-\hat{w}_{g_2}\|_{X_T}+\|\hat{w}_{f_1}-\hat{w}_{f_2}\|_{X_T}).$\\
Indeed, by definition of $R$ we have
$$\begin{array}{rcl}
|(R_1-R_2)(t,\xi)|&=&|R(\hat{g_1},\hat{g_1},\hat{f_1})(t,\xi)-R(\hat{g_2},\hat{g_2},\hat{f_2})(t,\xi)|\\
&\leq&\left(|R(\hat{g_1}-\hat{g_2},\hat{g_1},\hat{f_1})|+|R(\hat{g_2},\hat{g_1}-\hat{g_2},\hat{f_1})|
+|R(\hat{g_2},\hat{g_2},\hat{f_1}-\hat{f_2})|\right)(t,\xi).
\end{array}$$
Therefore, using the Lemma \ref{lemR} and the equation \eqref{f1-f2}, we obtain the announced result.
 \item $\|(R_1-R_2)(t)\|_{L^2_\xi}\lesssim 
 t^{-1+\alpha}\varepsilon^2(\ln(t)\varepsilon^2+1)(\|\hat{w}_{f_1}-\hat{w}_{f_2}\|_{X_T}+\|\hat{w}_{g_1}-\hat{w}_{g_2}\|_{X_T})$.\\
In order to get this estimate, we use the relation \eqref{RIN},
$$\|(R_1-R_2)(t)\|_{L^2_\xi}\leq\|(I(\hat{g_1},\hat{g_1},\hat{f_1})-I(\hat{g_2},\hat{g_2},\hat{f_2}))(t)\|_{L^2_\xi}
+t^{-1}\|(N(\hat{g_1},\hat{g_1},\hat{f_1})-N(\hat{g_2},\hat{g_2},\hat{f_2}))(t)\|_{L^2_\xi}.$$
Once again, by linearity, we remark that 
$$\begin{array}{rcl}
I(\hat{g_1},\hat{g_1},\hat{f_1})-I(\hat{g_2},\hat{g_2},\hat{f_2})&=&I(\hat{g_1}-\hat{g_2},\hat{g_1},\hat{f_1})+
I(\hat{g_2},\hat{g_1}-\hat{g_2},\hat{f_1})+I(\hat{g_2},\hat{g_2},\hat{f_1}-\hat{f_2}),\\
N(\hat{g_1},\hat{g_1},\hat{f_1})-N(\hat{g_2},\hat{g_2},\hat{f_2})&=&N(\hat{g_1}-\hat{g_2},\hat{g_1},\hat{f_1})+
N(\hat{g_2},\hat{g_1}-\hat{g_2},\hat{f_1})+N(\hat{g_2},\hat{g_2},\hat{f_1}-\hat{f_2}).
\end{array}$$
Here, we just show how to control one term in $I$, and one in $N$, all the others computations are the same.
For $I$ we use the Lemma \ref{lem} and the equations \eqref{fh10} and \eqref{f1-f2},
$$\begin{array}{rcl}
\|I(\hat{g_1}-\hat{g_2},\hat{g_1},\hat{f_1})(t)\|_{L^2_\xi}&\lesssim&\|e^{it\partial_{xx}}g_1(t)\|_{L^\infty_x}
\|e^{it\partial_{xx}}f_1(t)\|_{L^\infty_x}\|(g_1-g_2)(t)\|_{L^2_x}\\
&\lesssim&t^{-1}(\|\hat{g_1}\|_{L^\infty_\xi}+t^{-\beta}\|\hat{g_1}\|_{H^{1,0}_\xi})
(\|\hat{f_1}\|_{L^\infty_\xi}+t^{-\beta}\|\hat{f_1}\|_{H^{1,0}_\xi})\|\hat{g_1}-\hat{g_2}\|_{L^2_\xi}\\
&\lesssim&t^{-1+\alpha}\varepsilon^2\left(\ln(t)\varepsilon^2\|\hat{w}_{f_1}-\hat{w}_{f_2}\|_{X_T}+\|\hat{w}_{g_1}-\hat{w}_{g_2}\|_{X_T}\right).
\end{array}$$
For the $N$ term, by equation \eqref{f1-f2} we have
$$\begin{array}{rcl}
\|N(\hat{g_1}-\hat{g_2},\hat{g_1},\hat{f_1})(t)\|_{L^2_\xi}&=&\|(\hat{g_1}-\hat{g_2})\overline{\hat{g_1}}\hat{f_1}(t)\|_{L^2_\xi}\\
&\lesssim&\|\hat{f_1}\|_{L^\infty_\xi}\|\hat{g_1}\|_{L^\infty_\xi}\|(\hat{g_1}-\hat{g_2})\|_{L^2_\xi}\\
&\lesssim&t^\alpha\varepsilon^2\left(\ln(t)\varepsilon^2\|\hat{w}_{f_1}-\hat{w}_{f_2}\|_{X_T}+\|\hat{w}_{g_1}-\hat{w}_{g_2}\|_{X_T}\right).
\end{array}$$
Combining these results, we have the desired one. For the $H^{1,0}$ part, we want to show that
 \item $\|\partial_\xi(R_1-R_2)(t)\|_{L^2_\xi}\lesssim 
 t^{-1+\alpha}\varepsilon^2(\ln(t)\varepsilon^2+1)(\|\hat{w}_{f_1}-\hat{w}_{f_2}\|_{X_T}+\|\hat{w}_{g_1}-\hat{w}_{g_2}\|_{X_T})$.\\
To prove this one, we write $\partial_\xi(R_1-R_2)$ by using the equation \eqref{dxir}, we obtain twelve terms, six terms in $I$, six in $N$. 
We need to couple these terms, using the same method as previously. For example, for $I$ we have
$$I(\partial_\xi\hat{g_1},\hat{g_1},\hat{f_1})-I(\partial_\xi\hat{g_2},\hat{g_2},\hat{f_2})=I(\partial_\xi(\hat{g_1}-\hat{g_2}),\hat{g_1},\hat{f_1})+
I(\partial_\xi\hat{g_2},\hat{g_1}-\hat{g_2},\hat{f_1})+I(\partial_\xi\hat{g_2},\hat{g_2},\hat{f_1}-\hat{f_2}).$$
Thus, we obtain eighteen terms with a coupled part in each of them. Following the way of the estimation of $\|(R_1-R_2)(t)\|_{L^2_\xi}$, and using
the equations \eqref{fh10} and \eqref{f1-f2}, we finally obtain the result.
The last estimate we deal with is
 \item $\|\xi^{2n+1}(R_1-R_2)(t)\|_{L^2_\xi}\lesssim 
 t^{-1+\alpha}\varepsilon^2(\ln(t)\varepsilon^2+1)(\|\hat{w}_{g_1}-\hat{w}_{g_2}\|_{X_T}+\|\hat{w}_{f_1}-\hat{w}_{f_2}\|_{X_T}).$\\
This is the same estimation as $\|(R_1-R_2)(t)\|_{L^2_\xi}$, and the computations are almost the same too.
As in the previous estimate, we use the following decomposition,
$$|\xi^{2n+1}(R_1-R_2)|\leq|\xi^{2n+1} R(\hat{g_1}-\hat{g_2},\hat{g_1},\hat{f_1})|+|\xi^{2n+1} R(\hat{g_2},\hat{g_1}-\hat{g_2},\hat{f_1})|
+|\xi^{2n+1} R(\hat{g_2},\hat{g_2},\hat{f_1}-\hat{f_2})|.$$
Let's just focus on the first term, using the equation \eqref{RIN}, we have that
$$|\xi^{2n+1} R(\hat{g_1}-\hat{g_2},\hat{g_1},\hat{f_1})|\leq|\xi^{2n+1} I(\hat{g_1}-\hat{g_2},\hat{g_1},\hat{f_1})|
+t^{-1}|\xi^{2n+1} N(\hat{g_1}-\hat{g_2},\hat{g_1},\hat{f_1})|.$$
For the $I$ term, we use a second time the equation \eqref{convexity} to obtain
$$|\xi^{2n+1}I(\hat{g_1}-\hat{g_2},\hat{g_1},\hat{f_1})|\leq|I(\xi^{2n+1}(\hat{g_1}-\hat{g_2}),\hat{g_1},\hat{f_1})|
+|I(\hat{g_1}-\hat{g_2},\xi^{2n+1}\hat{g_1},\hat{f_1})|+|I(\hat{g_1}-\hat{g_2},\hat{g_1},\xi^{2n+1}\hat{f_1})|.$$
Now we just do the same computations that previously. We take the $L^2_x$ part of the norm for the coupling part, and the others ones
in $L^\infty_x$. For $N$, we simply have
$$|\xi^{2n+1} N(\hat{g_1}-\hat{g_2},\hat{g_1},\hat{f_1})|=|N(\hat{g_1}-\hat{g_2},\xi^{2n+1}\hat{g_1},\hat{f_1})|.$$
As we have the relation $|\xi^{2n+1}\hat{g_1}|=|\xi^{2n+1}\hat{w}_{g_1}|$, we have the same bound once again.
\end{itemize}

\subsubsection{Conclusion.}

Set $\psi(t):=\left(\Phi_1(\hat{w}_{f_1},\hat{w}_{g_1})-\Phi_1(\hat{w}_{f_2},\hat{w}_{g_2})\right)(t)$, we remind that
$$\psi(t)=\displaystyle\int_1^tB_{g_1}(R_1-R_2)(s,\xi)ds+\displaystyle\int_1^tR_2(B_{g_1}-B_{g_2})(s,\xi)ds.$$
Thus, we have
$$\begin{array}{rcl}
\|\psi(t)\|_{L^{\infty}_\xi}&\leq&\displaystyle\int_1^t\|(R_1-R_2)(s)\|_{L^{\infty}_\xi}ds
+\displaystyle\int_1^t\|R_2(s)\|_{L^{\infty}_\xi}\|(B_{g_1}-B_{g_2})(s)\|_{L^{\infty}_\xi}ds,\\
\|\psi(t)\|_{L^2_\xi}&\leq&\displaystyle\int_1^t\|(R_1-R_2)(s)\|_{L^2_\xi}ds
+\displaystyle\int_1^t\|R_2(s)\|_{L^2_\xi}\|(B_{g_1}-B_{g_2})(s)\|_{L^{\infty}_\xi}ds,\\
\|\xi^{2n+1}\psi(t)\|_{L^2_\xi}&\leq&\displaystyle\int_1^t\|\xi^{2n+1}(R_1-R_2)(s)\|_{L^2_\xi}ds
+\displaystyle\int_1^t\|\xi^{2n+1}R_2(s)\|_{L^2_\xi}\|(B_{g_1}-B_{g_2})(s)\|_{L^{\infty}_\xi}ds,\\
\|\partial_\xi\psi(t)\|_{L^2_\xi}&\leq&\displaystyle\int_1^t\|\partial_\xi B_{g_1}(s)\|_{L^2_\xi}\|(R_1-R_2)(s)\|_{L^\infty_\xi}ds
+\displaystyle\int_1^t\|\partial_\xi(R_1-R_2)(s)\|_{L^2_\xi}ds\\
&&+\displaystyle\int_1^t\|R_2(s)\|_{L^\infty_\xi}\|\partial_\xi(B_{g_1}-B_{g_2})(s)\|_{L^2_\xi}ds
+\displaystyle\int_1^t\|\partial_\xi R_2(s)\|_{L^2_\xi}\|(B_{g_1}-B_{g_2})(s)\|_{L^{\infty}_\xi}ds.\\
\end{array}$$
Finally, we find, for a constant $M>0$,
$$\|\Phi_1(\hat{w}_{f_1},\hat{w}_{g_1})-\Phi_1(\hat{w}_{f_2},\hat{w}_{g_2})\|_{X_T}\leq M\varepsilon^2(1+\ln(t)\varepsilon^2)
(\|\hat{w}_{f_1}-\hat{w}_{f_2}\|_{X_T}+\|\hat{w}_{g_1}-\hat{w}_{g_2}\|_{X_T}).$$
We chose $\varepsilon$ small enough. Thus, for $T$ sufficiently close to 1, $M\varepsilon^2(1+\varepsilon^2\ln(T))\leq\frac14$, and wet get
\begin{equation}
\|\Phi(\hat{w}_{f_1},\hat{w}_{g_1})-\Phi(\hat{w}_{f_2},\hat{w}_{g_2})\|_E\leq\frac12
\|(\hat{w}_{f_1},\hat{w}_{g_1})-(\hat{w}_{f_2},\hat{w}_{g_2})\|_E.\label{contloc}
\end{equation}
Finally, for such a $T$, $\Phi$ is a contraction. We can apply the fixed point theorem to get the local existence of the solutions. Let's see now of to pass from
the local existence to the global one.

\subsection{Global existence of the solution.}
The key argument for the transition from the local existence to the global one is the continuity. First, we show that 
$t\mapsto\|\hat{w}_f(t)\|_{L^\infty_\xi}+\|t^{-\alpha}\hat{w}_f(t)\|_{H^{1,0}_\xi}+\|t^{-\alpha}\hat{w}_f(t)\|_{H^{0,2n+1}_\xi}$ is continuous. 
Then we will conclude by a connectivity argument.
Let's remark that, for $1<t_1<t_2<T$, with the $T$ defined in the previous section,
$$\hat{w}_f(t_2)-\hat{w}_f(t_1)=\displaystyle\int_{t_1}^{t_2}B_g(s,\xi)R(\hat{g},\hat{g},\hat{f})(s,\xi)ds.$$
By the Lemma \ref{lemR} and the estimate \eqref{fh10}, we have that
\begin{equation}\begin{array}{rcl}
\left|\|\hat{w}_f(t_2)\|_{L^\infty_\xi}-\|\hat{w}_f(t_1)\|_{L^\infty_\xi}\right|&\leq&\|\hat{w}_f(t_2)-\hat{w}_f(t_1)\|_{L^\infty_\xi}\\
&\leq&\displaystyle\int_{t_1}^{t_2}\|R(\hat{g},\hat{g},\hat{f})(s)\|_{L^\infty_\xi}ds\\
&\lesssim&\left|{t_2}^{3\alpha-\delta}-{t_1}^{3\alpha-\delta}\right|\underset{t_1\to t_2}{\longrightarrow}0.\label{wcau}
\end{array}\end{equation}
For the $L^2$ part, we remark that
$$\left|t_2^{-\alpha}\hat{w}_f(t_2)-t_1^{-\alpha}\hat{w}_f(t_1)\right|\leq
\left|t_2^{-\alpha}-t_1^{-\alpha}\right|\left|\hat{w}_f(t_2)\right|+t_1^{-\alpha}\left|\hat{w}_f(t_2)-\hat{w}_f(t_1)\right|.$$
With the Proposition \ref{pro}, we control the $\|\hat{w}_f(t_2)\|_{H^{1,0}_\xi}$ and $\|\hat{w}_f(t_2)\|_{H^{0,2n+1}_\xi}$ terms.
Therefore, using this time the Lemma \ref{lemR} and the estimates \eqref{fh10}, \eqref{R2}, \eqref{dxiB2}, \eqref{dxiR2} and \eqref{xiR2}, we have
$$\begin{array}{rcl}
\|\hat{w}_f(t_2)-\hat{w}_f(t_1)\|_{H^{1,0}_\xi}&\leq&\displaystyle\int_{t_1}^{t_2}(\|R(\hat{g},\hat{g},\hat{f})(s)\|_{H^{1,0}_\xi}
+\|R(\hat{g},\hat{g},\hat{f})(s)\|_{L^\infty_\xi}\|\partial_\xi B(s)\|_{L^2_\xi})ds \\
&\lesssim&\left|{t_2}^{\alpha}-{t_1}^{\alpha}\right|
+\left|{t_2}^{4\alpha-\delta}-{t_1}^{4\alpha-\delta}\right|\underset{t_1\to t_2}{\longrightarrow}0.\\
\|\hat{w}_f(t_2)-\hat{w}_f(t_1)\|_{H^{0,2n+1}_\xi}&\leq&\displaystyle\int_{t_1}^{t_2}\|R(\hat{g},\hat{g},\hat{f})(s)\|_{H^{0,2n+1}_\xi}ds\\
&\lesssim&\left|{t_2}^{\alpha}-{t_1}^{\alpha}\right|\underset{t_1\to t_2}{\longrightarrow}0.
\end{array}$$
Thus, we obtain by the previous computation the continuity of the application 
$$t\mapsto\|\hat{w}_f(t)\|_{L^\infty_\xi}+\|t^{-\alpha}\hat{w}_f(t)\|_{H^{1,0}_\xi}+\|t^{-\alpha}\hat{w}_f(t)\|_{H^{0,2n+1}_\xi}.$$ 
Finally, taking the supremum of this application on $[1,T]$, we have the continuity of the application $T\mapsto\|\hat{w}_f\|_{X_T}$.
Thanks to this continuity, we define the space 
$$A:=\left\{T\geq1, \|\hat{w}_f\|_{X_T}\leq 2\varepsilon\right\}\subset[1,+\infty).$$
Now, we show that $A=[1,+\infty).$ Let's remark that,
\begin{itemize}
 \item $A$ is non empty.\\
 Indeed, by the hypothesis of the Theorem \ref{theo} and the embedding of $H^1(\mathbb{R})$ in $L^\infty(\mathbb{R})$, $1\in A$. 
 \item $A$ is closed.\\
 This is due to the large inequality in the definition of $A$ and the continuity of the application $T\mapsto\|\hat{w}_f\|_{X_T}$.
 \item $A$ is open.\\
 Here we use the Proposition \ref{pro}, if $T\in A$, then we have that :
$$\begin{array}{rcl}
\left\|\hat{w}_f\right\|_{X_T}&\leq&\varepsilon
+M\left\|\hat{w}_f\right\|_{X_T}\left\|\hat{w}_g\right\|_{X_T}^2(1+\left\|\hat{w}_f\right\|_{X_T}^4)(1+\left\|\hat{w}_g\right\|_{X_T}^4)\\
&\leq&\varepsilon+8M\varepsilon^3(1+16\varepsilon^4)^2.
\end{array}$$
Therefore, by the choice of $\varepsilon$ in \eqref{epsilon}, $\left\|\hat{w}_f\right\|_{X_T}\leq\frac32\varepsilon,$  we can go further for $T$. \end{itemize}
Finally, $A$ is non empty, open, closed and included in the interval $[1,+\infty),$ therefore $A=[1,+\infty)$ by connectivity. Thus, we have the global existence
under the assumptions of the Proposition \ref{pro}. 
The last thing to check is that these assumptions are satisfied under the ones of the Theorem \ref{theo}.
Fix $\varepsilon>0$ and suppose that $\left\|u_1\right\|_{H^{1,0}_x}+\left\|u_1\right\|_{H^{0,2n+1}_x}\leq\frac\varepsilon2$.
We want to control $\hat{w}_f^1(\xi)=\hat{f}(1,\xi)=e^{i\xi^2}\hat{u}(1,\xi)$, we have
$$\begin{array}{rcl}
\|\hat{w}_f^1(\xi)\|_{H^{1,0}_\xi}&\leq&\|\hat{u}(1)\|_{L^2_\xi}+\|\xi\hat{u}(1)\|_{L^2_\xi}+\|\partial_\xi\hat{u}(1)\|_{L^2_\xi}\leq\frac\varepsilon2,\\
\|\hat{w}_f^1(\xi)\|_{H^{0,2n+1}_\xi}&=&\|\hat{u}(1)\|_{H^{0,2n+1}_\xi}\leq\frac\varepsilon2.
\end{array}$$
Therefore,
$$\|\hat{w}_f^1(\xi)\|_{H^{1,0}_\xi}+\|\hat{w}_f^1(\xi)\|_{H^{0,2n+1}_\xi}\leq\varepsilon.$$
The hypothesis of the Proposition \ref{pro} are checked, and we have the global solution for the Theorem \ref{theo}.

\subsection{The decay estimate.}
This estimate comes from the Lemma \ref{lem}. Indeed, let $(u,v)$ be the pair of solutions of our problem \eqref{sys}, thus we have that for all $\beta\in(0,\frac{1}{4})$,
\begin{align*}
\|u(t)\|_{L^\infty_x}=\|e^{it\partial_{xx}}f(t)\|_{L^\infty_x}&\lesssim\dfrac{1}{t^{\frac{1}{2}}}\|\hat{f}(t)\|_{L^\infty_\xi}
 +\dfrac{1}{t^{\frac{1}{2}+\beta}}\|\hat{f}(t)\|_{H^{1,0}_\xi}\\
 &\lesssim \dfrac{1}{t^{\frac{1}{2}}}\|\hat{w}_f\|_{L^\infty_\xi}+\dfrac{1}{t^{\frac{1}{2}+\beta}}\|\hat{f}(t)\|_{H^{1,0}_\xi}.\\
\end{align*}
By the definition of the $X_T$ norm and the bound \eqref{fh10}, we get
\begin{equation*}
\|u(t)\|_{L^\infty_x}\lesssim t^{-\frac12}(1+t^{\alpha-\beta})\left\|\hat{w}_f\right\|_{X_T}(1+\left\|\hat{w}_g\right\|_{X_T}^2).
\end{equation*}
Remembering that $\alpha<\beta$, we finally use the control of the $X_T$ norm from the previous subsection to get the desired decay of the Theorem \ref{theo}.

\subsection{The long-range scattering.}\label{subfin}
All the ingredients we need to exhibit the long-range scattering are already in the equation \eqref{wcau},
$$\|\hat{w}_f(t_2)-\hat{w}_f(t_1)\|_{L^\infty_\xi}\lesssim\left|{t_2}^{3\alpha-\delta}-{t_1}^{3\alpha-\delta}\right|.$$
As $3\alpha-\delta<0$, the Cauchy criterion for the existence of a limit of a function allows us to construct the application 
$W^\infty_f\in L^\infty_\xi(\mathbb{R})$ by $W^\infty_f:\xi\mapsto\lim\limits_{t \rightarrow +\infty}\hat{w}_f(t,\xi)$ in the $L^\infty$ sense.
Let $t_1\rightarrow+\infty$ in the previous equation, we obtain
\begin{equation}
\|\hat{w}_f(t)-W^\infty_f\|_{L^\infty_\xi}\lesssim t^{3\alpha-\delta}.\label{W-w}
\end{equation}
We still have to deal with the $H^{0,n}$ case. For that purpose, we'll use the following embedding
\begin{equation}\|f\|_{L^1_\xi}\lesssim\|f\|_{L^2_\xi}^{\frac12}\|xf\|_{L^2_\xi}^{\frac12}.\label{fl1} \end{equation}
Let's prove this inequality, using Cauchy-Schwarz. Let $M>0$,
$$\begin{array}{rcl}
\|f\|_{L^1_x}&=&\displaystyle\int_{\left|x\right|<M}1\left|f\right|+\displaystyle\int_{\left|x\right|>M}x^{-1}x\left|f\right|\\
&\lesssim&M^\frac12\|f\|_{L^2_x}+M^{-\frac12}\|xf\|_{L^2_x}.
\end{array}$$
Choosing $M=\|xf\|_{L^2_x}^{\frac12}\|f\|_{L^2_x}^{-\frac12}$, we get the inequality \eqref{fl1}.
Thereby, we have that
\begin{equation}\begin{array}{rcl}
\|\xi^nR\|_{L^2_\xi}&\lesssim&\|R^\frac12\|_{L^\infty_\xi}\|\xi^nR^\frac12\|_{L^2_\xi}\\
&\lesssim&\|R\|^\frac12_{L^\infty_\xi}\|\xi^{2n}R\|^\frac12_{L^1_\xi}\\
&\lesssim&\|R\|^\frac12_{L^\infty_\xi}\|R\|^\frac12_{H^{0,2n+1}_\xi},
\end{array}\label{perte} \end{equation}
where we used \eqref{fl1} in the last inequality.
Finally we have 
$$\|\hat{w}_f(t_2)-\hat{w}_f(t_1)\|_{H^{0,n}_\xi}\lesssim\left|{t_2}^{\frac{4\alpha-\delta}2}-{t_1}^{\frac{4\alpha-\delta}2}\right|.$$
As $4\alpha-\delta<0$ (see Remark \ref{rem4}), the Cauchy criterion for the existence of a limit of a function allows us to construct the application 
$W^n_f\in H^{0,n}_\xi(\mathbb{R})$ by $W^n_f:\xi\mapsto\lim\limits_{t \rightarrow +\infty}\hat{w}_f(t,\xi)$ in the $H^{0,n}$ sense.
Let $t_1\rightarrow+\infty$ in the previous equation, we obtain
$$\|\hat{w}_f(t)-W^n_f\|_{H^{0,n}_\xi}\lesssim t^{\frac{4\alpha-\delta}2}.$$
These inequalities are verified for all $\alpha$ small enough and all $0<\delta<\frac{1}{4}$. The idea is to take the limits for~$\alpha\rightarrow0$ and 
$\delta\rightarrow\frac14$. Let $\nu:=\frac14-\delta+4\alpha\in(0,\frac14)$, we obtain the desired estimates of long-range scattering.  
The last thing to check is that $W^\infty_f=W^n_f$.
This is a consequence of the Riesz-Fischer Theorem for the completeness of $L^p$. Indeed, it's proved in this theorem that if $\hat{w}_f(t_n)$ converges to 
$W$ in $L^p$, then there exists a subsequence $\hat{w}_f(t_{\varphi(n)})$ that converges to $W$ almost everywhere. Taking $p=2$ and $p=\infty$, we show that 
$W^\infty_f=W^n_f$ almost everywhere. Thus we can define $W_f:=W^\infty_f=W^n_f\in L^\infty_\xi\cap H^{0,n}$.

\subsection{The asymptotic formula.}
From the previous part, we know that it exists an unique $W^\infty_f\in L^\infty_\xi(\mathbb{R})$ such that
\begin{equation*}
\|\hat{f}(t,.)\exp\left(\dfrac{i}{2\sqrt{2\pi}}\displaystyle\int_1^t\frac{1}{s}\left|\hat{v}(s,.)\right|^2ds\right)-W_f\|_{L^\infty_\xi}\lesssim t^{-\frac14+\nu}. 
\end{equation*}
By the equation \eqref{defA} we have
\begin{align*}
u(t,x)=e^{it\partial_{xx}}f(t,x)=(2it)^{-\frac12}e^{i\frac{x^2}{4t}}\hat{f}(\frac{x}{2t})+A_f(t,x), 
\end{align*}
where, using the bound \eqref{fh10}, 
\begin{equation*}
 \left\|A_f(t)\right\|_{L^\infty_x}\lesssim t^{-\frac12-\delta}\|\hat{f}\|_{H^{1,0}_\xi}\lesssim t^{-\frac12+\alpha-\delta}\lesssim t^{-\frac34+\nu}.
\end{equation*}
Thus, we obtain the following asymptotic expansion for $u$, for a large time $t$:
\begin{equation}
u(t,x)=\dfrac1{(2it)^{\frac12}}\exp(i\dfrac{x^2}{4t})W_f(\dfrac x{2t})\exp(\dfrac{-i}{2\sqrt{2\pi}}\displaystyle\int_1^t\frac1\tau|\hat{v}(\tau,\frac x{2t})|^2d\tau)
+\mathcal{O}(t^{-\frac34+\nu}). 
\label{asymp1}\end{equation}
This is an implicit formula. Indeed we have a $v$ term in the RHS of the equality. To deal with this term, we follow the method of Hayashi and Naumkin in \cite{HN}.
Let us define 
\begin{equation}
\gamma_g(t):=\displaystyle\int_1^t(|\hat{w}_g(\tau)|^2-|\hat{w}_g(t)|^2)\dfrac{d\tau}\tau.
\label{defgamma}\end{equation}
Thus, for $1\leq s\leq t$, we have
\begin{equation*}
\gamma_g(t)-\gamma_g(s)=\displaystyle\int_s^t(|\hat{w}_g(\tau)|^2-|\hat{w}_g(t)|^2)\dfrac{d\tau}\tau+(|\hat{w}_g(s)|^2-|\hat{w}_g(t)|^2)\ln(s). 
\end{equation*}
We use the bounds of the Lemma \ref{lemR} and the equation \eqref{fh10} to get
\begin{equation*}
\left||\hat{w}_g(\tau)|^2-|\hat{w}_g(t)|^2\right|\lesssim\left||\hat{w}_g(\tau)|-|\hat{w}_g(t)|\right|\lesssim\left|\tau^{3\alpha-\delta}-t^{3\alpha-\delta}\right|.
\end{equation*}
By using this bound in the previous equation we obtain
\begin{equation*}
|\gamma_g(t)-\gamma_g(s)|\lesssim\left|t^{3\alpha-\delta}-s^{3\alpha-\delta}\right|+\left|t^{3\alpha-\delta}\ln(t)-s^{3\alpha-\delta}\ln(s)\right|. 
\end{equation*}
Therefore, by the Cauchy criterion, it exists $\Gamma_g\in L^\infty_\xi(\mathbb{R})$ defined by 
$\Gamma_g:\xi\mapsto\lim\limits_{t \rightarrow +\infty}\gamma_g(t,\xi)$ in $L^\infty_\xi$.
Let~$t\rightarrow+\infty$ in the previous equation. We use that $\ln(t)\lesssim t^\alpha$ to obtain
\begin{equation}
|\gamma_g(s)-\Gamma_g|\lesssim s^{4\alpha-\delta}.
\label{Gamma-gamma}\end{equation}
Back to the definition of $\gamma_g$ in \eqref{defgamma}, we have 
\begin{equation*}\begin{array}{rcl}
 \displaystyle\int_1^t|\hat{w}_g(\tau)|^2\dfrac{d\tau}\tau&=&\gamma_g(t)+|\hat{w}_g(t)|^2\ln(t)\\
 &=&|W_g|^2\ln(t)+\Gamma_g\\
 &&+\left(\gamma_g(t)-\Gamma_g\right)+\left(|\hat{w}_g(t)|^2-|W_g|^2\right)\ln(t).
\end{array}\end{equation*}
Thus, using the equations \eqref{W-w} and \eqref{Gamma-gamma}, we obtain
\begin{equation*}
\|\displaystyle\int_1^t|\hat{w}_g(\tau)|^2\dfrac{d\tau}\tau-|W_g|^2\ln(t)-\Gamma_g\|_{L^\infty_\xi}\lesssim t^{4\alpha-\delta}\lesssim t^{\nu-\frac14}.
\end{equation*}
We combine this equation with the equation \eqref{asymp1} to get the formula for $u$ for large time $t$.

\bibliographystyle{abbrv}
\bibliography{bibli}

\end{document}